\documentclass[11pt,reqno,fleqn]{article}
\usepackage{amsfonts,amssymb,mathrsfs,amsmath,amsthm,relsize}
\usepackage[top=20mm,bottom=20mm,left=20mm,right=20mm]{geometry}
\usepackage{times}
\usepackage{multicol}
\usepackage{graphicx}
\usepackage{graphics}
\usepackage{enumitem}
\usepackage{cite}
\usepackage{mathtools}
\usepackage{etoolbox}
\usepackage{fancyhdr}
\pagestyle{fancy}
\lhead{The final publication is available at IOS Press through http://dx.doi.org/10.3233/JIFS-210596}
\rhead{}
\fancypagestyle{titlepage}
{
   \fancyhf{}
   \lhead{The final publication is available at IOS Press through http://dx.doi.org/10.3233/JIFS-210596}
   \rhead{}
   \fancyfoot[C]{\thepage}
}

\DeclarePairedDelimiter\floor{\lfloor}{\rfloor}
\newcommand{\ccomma}{\mathbin{\raisebox{0.5ex}{,}}}
\allowdisplaybreaks
\makeatletter
\let\@fnsymbol\@arabic
\makeatother

\newcommand\blfootnote[1]{%
  \begingroup
  \renewcommand\thefootnote{}\footnote{#1}%
  \addtocounter{footnote}{-1}%
  \endgroup
}

\theoremstyle{definition}
\newtheorem{theorem}{Theorem}[section]
\numberwithin{equation}{section}
\newtheorem{definition}[theorem]{Definition}
\newtheorem{lemma}[theorem]{Lemma} 
\newtheorem{example}[theorem]{Example}
\makeatletter
\patchcmd{\@maketitle}{\begin{center}}{\begin{flushleft}}{}{}
\patchcmd{\@maketitle}{\begin{tabular}[t]{c}}{\begin{tabular}[t]{@{}l}}{}{}
\patchcmd{\@maketitle}{\end{center}}{\end{flushleft}}{}{}
\makeatother
\begin{document}
\title{Tauberian theorems for statistical Ces\`{a}ro and statistical logarithmic summability of sequences in intuitionistic fuzzy normed spaces}
\author{Enes Yavuz}
\date{{\small Department of Mathematics, Manisa Celal Bayar University, Manisa, Turkey\\E-mail: enes.yavuz@cbu.edu.tr}}
\maketitle
\thispagestyle{titlepage}
\blfootnote{\emph{Keywords:} intuitionistic fuzzy normed space, Tauberian theorem, Ces\`{a}ro and logarithmic summability methods, statistical convergence, slow oscillation\\\rule{0.63cm}{0cm}\emph{\!\!Mathematics Subject Classification:} 03E72, 40A05, 40G05, 40G15, 40E05}

\noindent\textbf{Abstract:} We define statistical Ces\`{a}ro and statistical logarithmic summability methods of sequences in intuitionistic fuzzy normed spaces($IFNS$) and give slowly oscillating type and Hardy type Tauberian conditions under which statistical Ces\`{a}ro summability and statistical logarithmic summability imply convergence in $IFNS$. Besides, we obtain analogous results for the higher order summability methods as corollaries. Also, two theorems concerning the convergence of statically convergent sequences in $IFNS$ are proved in the paper.
\noindent
\section{Introduction and preliminaries}
Events and problems that humankind encounter in real world are commonly complex and imprecise due to uncertainty of parameters and indefiniteness of the objects involved. Researchers proposed various theories of uncertainty to handle such events and problems involving incomplete information. Among them, Lotfi A. Zadeh proposed fuzzy logic as an extension Boolean logic with the introduction of the mathematical concept of fuzzy sets\cite{zadeh}. In \cite{zadeh}, Zadeh extended classical sets to fuzzy sets by using gradual memberships taking real values in the interval $[0,1]$, instead of Boolean memberships which take only integer values $\{0,1\}$. Following its introduction, fuzzy sets were utilized by many researchers in different fields of science to process non-categorical data. Besides, motivated by fuzzy sets, Atanassov\cite{atanassov1983,atanassov} defined intuitionistic fuzzy sets($IFS$) by considering gradual non-memberships as well as Zadeh's gradual memberships. Like fuzzy sets, $IFS$ were also applied in many areas of science such as control systems, robotics, computer, medical diagnosis, education, etc. Theoretical basis of intuitionistic fuzzy set theory also developed in time. In particular, Park\cite{ifmetric} defined $IF-$metric and it was pursued by $IF-$norm\cite{ifnorm}. Convergence of sequences with respect to $IF-$norm was defined and various convergence methods in the statistical sense were proposed to achieve a limit where ordinary convergence fails\cite{karakus,mursaleen1,muhiddin,mursaleen3,jifs1,jifs2,jifs3,jifs4,selma,jifs5}. Also, there are recent studies applying some weighted mean summation methods in $IFNS$ to achieve limit of sequences and providing Tauberian conditions which guarantee ordinary convergence\cite{yavuzcesaro,yavuzlog}.

In some cases of sequences as in Example \ref{ex1}, Example \ref{ex2}, Example \ref{ex3} and Example \ref{ex4}, a limit can not be achieved via statistical convergence or via weighted mean summation methods in $IFNS$. In such cases, we need to use some stronger methods of convergence to achieve a limit in $IFNS$, and to investigate whether ordinary convergence can be guaranteed with some extra conditions. For this aim, in this study we introduce statistical Ces\`{a}ro and statistical logarithmic summability of sequences in $IFNS$, and obtain Tauberian conditions of slowly oscillating type and Hardy type under which ordinary convergence in $IFNS$ follows from statistical Ces\`{a}ro and statistical logarithmic summability. Besides, we show that statistical convergence of slowly oscillating sequences in $IFNS$ implies ordinary convergence in the space. Examples in the paper provide also new types of sequences for statistical Ces\`{a}ro and statistical logarithmic summation methods in classical normed spaces. Before to continue main results, we now give some preliminaries.
\begin{definition}\cite{lael}
The triplicate $(N,\mu,\nu)$ is said to be an $IFNS$ if $N$ is a real vector space, and $\mu,\nu$ are fuzzy sets on $N\times\mathbb{R}$ satisfying the following conditions for every $u,w\in N$ and $t,s\in\mathbb{R}$:
\begin{enumerate}[label=(\alph*)]
\item $\mu(u,t)=0$ for $t\leq0$,
\item $\mu(u,t)=1$ for all $t\in\mathbb{R^+}$ if and only if $u=\theta$
\item $\mu(cu,t)=\mu\left(u,\frac{t}{|c|}\right)$ for all $t\in\mathbb{R^+}$ and $c\neq0$,
\item $\mu(u+w,t+s)\geq \min\{\mu(u,t),\mu(w,s)\}$,
\item $\lim_{t\to\infty}\mu(u,t)=1$ and $\lim_{t\to0}\mu(u,t)=0$,

\item $\nu(u,t)=1$ for $t\leq0$,
\item $\nu(u,t)=0$ for all $t\in\mathbb{R^+}$ if and only if $u=\theta$
\item $\nu(cu,t)=\nu\left(u,\frac{t}{|c|}\right)$ for all $t\in\mathbb{R^+}$ and $c\neq0$,
\item $\max\{\nu(u,t),\nu(w,s)\}\geq\nu(u+w,t+s)$,
\item $\lim_{t\to\infty}\nu(u,t)=0$ and $\lim_{t\to0}\nu(u,t)=1$.
\end{enumerate}
We call $(\mu,\nu)$ an $IF-$norm on $N$.
\end{definition}
\begin{example}\label{standartnorm}
Let $(N, \Vert\cdot\Vert)$ be a normed space and $\mu_0$, $\nu_0$ be fuzzy sets on $N\times\mathbb{R}$ defined by
\begin{eqnarray*}
\mu_0(u,t)=
\begin{cases}
0, \quad &t\leq0,\\
\frac{t}{t+\Vert u\Vert}\ccomma &t>0,
\end{cases}
\hspace{2cm}
\nu_0(u,t)=
\begin{cases}
1, \quad &t\leq0,\\
\frac{\Vert u\Vert}{t+\Vert u\Vert}\ccomma &t>0.
\end{cases}
\end{eqnarray*}
Then $(\mu_0, \nu_0)$ is $IF-$norm on $N$. We note that we will use this $IF-$norm in the examples of the paper.
\end{example}
Throughout the paper $(N,\mu,\nu)$ will denote an $IFNS$.
\begin{definition}\cite{lael}
A sequence $(u_k)$ in $(N,\mu,\nu)$ is said to be convergent to $a\in N$ and denoted by $u_k\to a$ if for every $\varepsilon>0$ and $t>0$ there exists $k_0\in \mathbb{N}$ such that $\mu(u_k-a,t)>1-\varepsilon$ and $\nu(u_k-a,t)<\varepsilon$ for all $k\geq k_0$.
\end{definition}
\begin{definition}\cite{karakus}
Let sequence $(u_k)$ be in $(N,\mu,\nu)$. We say that $(u_k)$ is statistically convergent to $a\in N$ with respect to fuzzy norm $(\mu,\nu)$ provided that, for every $\varepsilon>0$ and $t>0$,
\begin{equation*}
    \lim_{n\to\infty}\frac{1}{n}\left|\left\{k\leq n : \mu(u_k-a,t)\leq 1-\varepsilon \quad \textrm{or} \quad \nu(u_k-u_n,t)\geq\varepsilon\right\}\right|=0.
\end{equation*}
In this case we write $st_{\mu,\nu}-\lim u=a$.
\end{definition}
\begin{theorem}\label{karakusthm}\cite{karakus}
Let sequence $(u_k)$ be in $(N,\mu,\nu)$ and $a\in N$. Then, $st_{\mu,\nu}-\lim u=a$ if and only if $st-\lim \mu(u_k-a,t)=1$ and $st-\lim \nu(u_k-a,t)=0$ for each $t$.
\end{theorem}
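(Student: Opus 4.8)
The plan is to reduce the claimed equivalence to two elementary facts about natural density: that a subset of a density-zero set again has density zero, and that the counting function is subadditive over unions. The bridge between the two sides is the observation that $\mu$ and $\nu$, being fuzzy sets, take values in $[0,1]$, so that for the real sequences $\mu(u_k-a,t)$ and $\nu(u_k-a,t)$ one has $|\mu(u_k-a,t)-1|=1-\mu(u_k-a,t)$ and $|\nu(u_k-a,t)-0|=\nu(u_k-a,t)$. Consequently, for fixed $t>0$, the condition $st-\lim \mu(u_k-a,t)=1$ is, by the very definition of ordinary statistical convergence, equivalent to saying that for every $\varepsilon>0$ the index set
\[
A_{\varepsilon,t}=\{k\in\mathbb{N}:\mu(u_k-a,t)\leq 1-\varepsilon\}
\]
has natural density zero, and likewise $st-\lim\nu(u_k-a,t)=0$ is equivalent to each
\[
B_{\varepsilon,t}=\{k\in\mathbb{N}:\nu(u_k-a,t)\geq\varepsilon\}
\]
having density zero. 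Thus both sides of the theorem are recast entirely in terms of the densities of the sets $A_{\varepsilon,t}$ and $B_{\varepsilon,t}$.

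First I would establish the forward implication. Assuming $st_{\mu,\nu}-\lim u=a$, the defining condition states precisely that $A_{\varepsilon,t}\cup B_{\varepsilon,t}$ has density zero for all $\varepsilon,t>0$. Since $A_{\varepsilon,t}$ and $B_{\varepsilon,t}$ are each contained in this union and the counting function $|\{k\leq n:\cdots\}|$ is monotone with respect to inclusion, both sets inherit density zero, which by the reformulation above is exactly $st-\lim\mu(u_k-a,t)=1$ and $st-\lim\nu(u_k-a,t)=0$. For the converse I would use subadditivity: given that both ordinary statistical limits hold, fix $\varepsilon,t>0$ and invoke the inequality
\[
\frac{1}{n}\bigl|\{k\leq n:k\in A_{\varepsilon,t}\cup B_{\varepsilon,t}\}\bigr|\leq\frac{1}{n}\bigl|\{k\leq n:k\in A_{\varepsilon,t}\}\bigr|+\frac{1}{n}\bigl|\{k\leq n:k\in B_{\varepsilon,t}\}\bigr|,
\]
and let $n\to\infty$; since both terms on the right vanish, so does the left-hand side, which is the definition of $st_{\mu,\nu}-\lim u=a$.

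I do not anticipate a genuine obstacle here: the mathematical content is contained in the two density estimates, both of which are one-line consequences of monotonicity and subadditivity of counting. The only point requiring care is the passage between the absolute-value formulation of ordinary statistical convergence and the one-sided inequalities defining $A_{\varepsilon,t}$ and $B_{\varepsilon,t}$; this step is legitimate precisely because $\mu$ and $\nu$ are bounded between $0$ and $1$, and it should be stated explicitly. Everything else is a matter of unwinding the definitions consistently for each fixed $t>0$.
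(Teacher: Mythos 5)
Your argument is correct and complete: recasting both sides in terms of the density of the sets $A_{\varepsilon,t}$ and $B_{\varepsilon,t}$ (using that $\mu,\nu$ take values in $[0,1]$, so the absolute-value formulation collapses to the one-sided inequalities) and then applying monotonicity of the counting function for the forward direction and subadditivity over the union for the converse is exactly the standard proof of this equivalence. Note that the paper itself does not prove this statement --- it is quoted from the cited reference \cite{karakus} --- and your argument coincides with the usual one given there; the only small point worth flagging is that the paper's displayed definition contains the typo $\nu(u_k-u_n,t)$ where $\nu(u_k-a,t)$ is intended, which you have (correctly) read in the intended way.
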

\begin{theorem}\label{convergencetost}\cite{karakus}
Let sequence $(u_k)$ be in $(N,\mu,\nu)$ and $a\in N$. If $(u_k)$ is convergent to $a$, then $(u_k)$ is statistically convergent to $a$.
\end{theorem}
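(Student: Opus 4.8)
The plan is to argue directly from the two definitions, exploiting the elementary fact that a finite set of indices has natural density zero. First I would fix an arbitrary $\varepsilon>0$ and $t>0$. Since $(u_k)$ converges to $a$ in $(N,\mu,\nu)$, the definition of convergence supplies a $k_0\in\mathbb{N}$ for which $\mu(u_k-a,t)>1-\varepsilon$ and $\nu(u_k-a,t)<\varepsilon$ hold simultaneously for every $k\geq k_0$.

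The key observation is then that the exceptional set
$$K(\varepsilon,t)=\left\{k\in\mathbb{N} : \mu(u_k-a,t)\leq 1-\varepsilon \ \text{or}\ \nu(u_k-a,t)\geq\varepsilon\right\}$$
must be contained in the initial segment $\{1,2,\dots,k_0-1\}$: every index $k\geq k_0$ satisfies both defining inequalities of convergence, so it violates neither clause of the ``or'' above and hence lies outside $K(\varepsilon,t)$. In particular $K(\varepsilon,t)$ is finite.

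It remains to convert finiteness into density zero. For each $n\geq k_0$ one has $\left|\{k\leq n : k\in K(\varepsilon,t)\}\right|\leq k_0-1$, whence
$$\frac{1}{n}\left|\left\{k\leq n : \mu(u_k-a,t)\leq 1-\varepsilon\ \text{or}\ \nu(u_k-a,t)\geq\varepsilon\right\}\right|\leq\frac{k_0-1}{n}\longrightarrow 0$$
as $n\to\infty$. Since $\varepsilon>0$ and $t>0$ were arbitrary, this is precisely the defining condition for $st_{\mu,\nu}-\lim u=a$.

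I do not anticipate any genuine obstacle here: the whole content is that ordinary convergence confines the ``bad'' indices to a finite set, and finite sets are null for the natural density. As an alternative route one could instead appeal to Theorem \ref{karakusthm}, reducing the claim to the classical real-variable fact that convergence of $\mu(u_k-a,t)\to 1$ and $\nu(u_k-a,t)\to 0$ forces their statistical convergence; but the direct density estimate above is cleaner and avoids invoking that characterization.
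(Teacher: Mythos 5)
Your argument is correct: ordinary convergence traps the exceptional set $K(\varepsilon,t)$ inside a finite initial segment, whose natural density is zero, which is exactly the defining condition for $st_{\mu,\nu}$-convergence (note the paper's displayed definition contains a typo, $\nu(u_k-u_n,t)$ in place of $\nu(u_k-a,t)$, and you correctly work with the intended version). The paper itself gives no proof, importing the result from the cited reference, and your finite-exceptional-set argument is precisely the standard proof of that cited theorem, so there is nothing to add.
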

\begin{definition}\label{q-bounded}\cite{qbounded}
A sequence $(u_k)$ in $(N,\mu,\nu)$ is called q-bounded if $\lim_{t\rightarrow\infty}\inf_{k\in \mathbb{N}}\mu(u_k,t)=1$ and\linebreak $\lim_{t\rightarrow\infty}\sup_{k\in \mathbb{N}}\nu(u_k,t)=0$.
\end{definition}
\begin{definition}\cite{yavuzcesaro}\label{slowdfn}
A sequence $(u_k)$ is slowly oscillating if and only if for all $t>0$ and for all $\varepsilon\in (0,1)$ there exist
$\lambda>1$ and $m_0\in \mathbb{N}$, depending on $t$ and $\varepsilon$, such that
\begin{equation*}
    \mu(u_k-u_n,t)>1-\varepsilon \quad \textrm{and} \quad \nu(u_k-u_n,t)<\varepsilon
\end{equation*}
whenever $m_0\leq n< k\leq\floor{\lambda n}$.
\end{definition}
\begin{definition}\cite{yavuzlog}\label{logslowdfn}
A sequence $(u_k)$ is slowly oscillating with respect to logarithmic summability if and only if for all $t>0$ and for all $\varepsilon\in (0,1)$ there exist $\lambda>1$ and $m_0\in \mathbb{N}$, depending on $t$ and $\varepsilon$, such that
\begin{equation*}
    \mu(u_k-u_n,t)>1-\varepsilon \quad \textrm{and} \quad \nu(u_k-u_n,t)<\varepsilon
\end{equation*}
whenever $m_0\leq n< k\leq\floor{n^{\lambda }}$.
\end{definition}
\begin{theorem}\label{cboundedtoslowly}\cite{yavuzcesaro}
Let sequence $(u_k)$ be in $(N,\mu,\nu)$. If $\{k(u_{k}-u_{k-1})\}$ is q-bounded, then $(u_k)$ is slowly oscillating.
\end{theorem}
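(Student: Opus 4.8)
The plan is to exploit the telescoping representation
\[
u_k-u_n=\sum_{j=n+1}^{k}(u_j-u_{j-1})=\sum_{j=n+1}^{k}\frac{1}{j}\,v_j,\qquad v_j:=j(u_j-u_{j-1}),
\]
which expresses the oscillation $u_k-u_n$ as a $1/j$-weighted sum of the q-bounded sequence $(v_j)$. Since q-boundedness of $(v_j)$ gives control of $\mu(v_j,s)$ and $\nu(v_j,s)$ that is \emph{uniform in $j$} once $s$ is large, the idea is to push this uniform control through the $IF$-norm triangle inequalities (d) and (i), using the scaling law (c) to absorb the weights $1/j$, and to arrange that the total ``$t$-budget'' consumed stays below the prescribed $t$ precisely when $k/n$ is close to $1$.

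First I would record two routine facts about any $IF$-norm. From (d) with $w=\theta$ (and $\mu(\theta,\cdot)\equiv1$ on $\mathbb{R}^+$ by (b)) one gets that $t\mapsto\mu(u,t)$ is nondecreasing, and symmetrically from (i) and (g) that $t\mapsto\nu(u,t)$ is nonincreasing. Iterating (d) and (c) over a finite sum then yields, for any positive reals $s_{n+1},\dots,s_k$,
\[
\mu\!\left(\sum_{j=n+1}^{k}\tfrac{1}{j}v_j,\ \sum_{j=n+1}^{k}s_j\right)\ \geq\ \min_{n+1\le j\le k}\mu\!\left(\tfrac{1}{j}v_j,s_j\right)=\min_{n+1\le j\le k}\mu(v_j,\,j s_j),
\]
together with the analogous bound obtained from (i) and (c), in which $\min,\mu,\geq$ are replaced throughout by $\max,\nu,\leq$.

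Now fix $t>0$ and $\varepsilon\in(0,1)$. By q-boundedness of $(v_j)$ choose $T>0$ so large that $\mu(v_j,T)>1-\varepsilon$ and $\nu(v_j,T)<\varepsilon$ for \emph{every} $j$, and set $\lambda:=e^{t/T}>1$ and $m_0:=1$. Taking the allocation $s_j:=T/j$ makes $j s_j=T$, so the extrema above are exactly $\min_j\mu(v_j,T)>1-\varepsilon$ and $\max_j\nu(v_j,T)<\varepsilon$; meanwhile, whenever $m_0\le n<k\le\floor{\lambda n}$,
\[
\sum_{j=n+1}^{k}s_j=T\sum_{j=n+1}^{k}\frac{1}{j}\le T\ln\frac{k}{n}\le T\ln\lambda=t .
\]
Combining these with the monotonicity of $\mu(u_k-u_n,\cdot)$ and $\nu(u_k-u_n,\cdot)$ gives $\mu(u_k-u_n,t)\ge\mu(u_k-u_n,\sum_j s_j)>1-\varepsilon$ and $\nu(u_k-u_n,t)\le\nu(u_k-u_n,\sum_j s_j)<\varepsilon$, which is exactly the statement of Definition \ref{slowdfn}.

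The only genuinely delicate point is the matching of scales: one must allocate the second-argument mass $s_j$ so that each rescaled term $\mu(v_j,js_j)$ lands at (or beyond) the uniform threshold $T$, while the total $\sum_j s_j$ stays at most $t$. The choice $s_j=T/j$ achieves both at once, and the harmonic-to-logarithm estimate $\sum_{j=n+1}^{k}1/j\le\ln(k/n)$ is what converts the Ces\`aro window $k\le\floor{\lambda n}$ into the quantitative bound $\sum_j s_j\le T\ln\lambda$; calibrating $\lambda=e^{t/T}$ then ties this budget to the target $t$. Everything else is bookkeeping with the $IF$-norm axioms.
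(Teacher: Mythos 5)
Your argument is correct: the telescoping identity $u_k-u_n=\sum_{j=n+1}^{k}\frac{1}{j}\,j(u_j-u_{j-1})$, the iterated use of axioms (c)--(d) (and (i)) together with the monotonicity of $\mu(\cdot,t)$ and $\nu(\cdot,t)$ in $t$, the uniform threshold $T$ supplied by q-boundedness, and the calibration $\lambda=e^{t/T}$ with the harmonic estimate $\sum_{j=n+1}^{k}1/j\le\ln(k/n)\le\ln\lambda$ yield exactly the requirements of Definition \ref{slowdfn}. Note that the present paper does not prove this statement but quotes it from \cite{yavuzcesaro}, where the proof proceeds along essentially this same route (telescoping the oscillation into the q-bounded terms $j(u_j-u_{j-1})$ and absorbing the $1/j$ weights via the scaling axiom), so your proposal matches the standard approach.
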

\begin{theorem}\label{lboundedtoslowly}\cite{yavuzlog}
Let sequence $(u_k)$ be in $(N,\mu,\nu)$. If $\{k\ln k(u_{k}-u_{k-1})\}$ is q-bounded, then $(u_k)$ is slowly oscillating with respect to logarithmic summability.
\end{theorem}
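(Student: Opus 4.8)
The plan is to mirror the structure of the proof of Theorem \ref{cboundedtoslowly}, replacing the harmonic sum $\sum 1/j$ by $\sum 1/(j\ln j)$ and the dilation $k\le\lfloor\lambda n\rfloor$ by $k\le\lfloor n^{\lambda}\rfloor$. First I would record the two monotonicity facts that follow from the axioms and that I use repeatedly: taking $w=\theta$ in (d) together with $\mu(\theta,s)=1$ shows $\mu(u,\cdot)$ is non-decreasing, and likewise (i) with $\nu(\theta,s)=0$ shows $\nu(u,\cdot)$ is non-increasing.

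Next I unpack the hypothesis. Fix $t>0$ and $\varepsilon\in(0,1)$. Since $\{k\ln k\,(u_k-u_{k-1})\}$ is q-bounded, there is $t_0>0$ such that for every $k\ge 2$,
\begin{equation*}
\mu\big(k\ln k\,(u_k-u_{k-1}),t_0\big)>1-\varepsilon \quad\text{and}\quad \nu\big(k\ln k\,(u_k-u_{k-1}),t_0\big)<\varepsilon.
\end{equation*}
Applying the homogeneity axioms (c) and (h) with $c=k\ln k$ and writing $s_j:=t_0/(j\ln j)$ converts these into
\begin{equation*}
\mu(u_j-u_{j-1},s_j)>1-\varepsilon \quad\text{and}\quad \nu(u_j-u_{j-1},s_j)<\varepsilon
\end{equation*}
for all $j\ge 2$.

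Then I telescope $u_k-u_n=\sum_{j=n+1}^{k}(u_j-u_{j-1})$ and apply the triangle inequalities (d) and (i) repeatedly to obtain
\begin{equation*}
\mu\Big(u_k-u_n,\textstyle\sum_{j=n+1}^{k}s_j\Big)\ge\min_{n<j\le k}\mu(u_j-u_{j-1},s_j)>1-\varepsilon
\end{equation*}
and the corresponding bound $\nu(u_k-u_n,\sum_{j=n+1}^{k}s_j)<\varepsilon$ via the $\max$. The crux is to control $\sum_{j=n+1}^{k}s_j=t_0\sum_{j=n+1}^{k}1/(j\ln j)$. Since $x\mapsto 1/(x\ln x)$ is decreasing on $[2,\infty)$, the integral comparison gives
\begin{equation*}
\sum_{j=n+1}^{k}\frac{1}{j\ln j}\le\int_{n}^{k}\frac{dx}{x\ln x}=\ln\frac{\ln k}{\ln n}\le\ln\lambda
\end{equation*}
whenever $2\le n<k\le\lfloor n^{\lambda}\rfloor$, because then $\ln k\le\lambda\ln n$; this is precisely where the exponent $\lambda$ (rather than a linear factor) is the right scale for the weight $k\ln k$.

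Finally I fix the parameters. Given $t$ and the $t_0$ above, choose $\lambda>1$ small enough that $t_0\ln\lambda\le t$ (possible since $t>0$ forces $e^{t/t_0}>1$), and take $m_0=2$. For $m_0\le n<k\le\lfloor n^{\lambda}\rfloor$ we then have $\sum_{j=n+1}^{k}s_j\le t_0\ln\lambda\le t$, so the monotonicity of $\mu(u_k-u_n,\cdot)$ and $\nu(u_k-u_n,\cdot)$ upgrades the two displayed estimates to $\mu(u_k-u_n,t)>1-\varepsilon$ and $\nu(u_k-u_n,t)<\varepsilon$, which is exactly Definition \ref{logslowdfn}. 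The only delicate point is the quantifier order: $t_0$ is pinned down by $\varepsilon$ alone, while $\lambda$ and $m_0$ are chosen afterward depending on $t$ and $t_0$; this is legitimate because Definition \ref{logslowdfn} allows $\lambda$ and $m_0$ to depend on $t$ and $\varepsilon$. I expect the integral estimate together with arranging this dependence correctly to be the main (and essentially the only) obstacle.
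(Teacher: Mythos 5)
Your argument is correct: q-boundedness plus the homogeneity axioms rescale each difference to $\mu(u_j-u_{j-1},t_0/(j\ln j))>1-\varepsilon$, telescoping with axioms (d) and (i) accumulates the scales, and the integral comparison $\sum_{j=n+1}^{k}1/(j\ln j)\le\ln(\ln k/\ln n)\le\ln\lambda$ for $k\le\lfloor n^{\lambda}\rfloor$ lets you choose $\lambda$ with $t_0\ln\lambda\le t$, exactly matching Definition \ref{logslowdfn} with the correct quantifier order. Note that this paper states Theorem \ref{lboundedtoslowly} without proof, citing \cite{yavuzlog}; your proof is the standard argument used there (the logarithmic analogue of Theorem \ref{cboundedtoslowly}), so there is nothing further to reconcile.
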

\begin{theorem}\cite{yavuzcesaro}\label{cesarotoordinary}
Let sequence $(u_k)$ be in $(N,\mu,\nu)$. If $(u_k)$ is Ces\`{a}ro summable to some $a\in N$ and slowly oscillating, then $(u_k)$ converges to $a$.
\end{theorem}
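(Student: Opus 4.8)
The plan is to carry out the classical de la Vallée Poussin argument (Schmidt's Tauberian theorem for Cesàro means) inside the intuitionistic fuzzy setting, replacing each absolute-value estimate by the matching $\mu$/$\nu$ inequality coming from axioms (c), (d) and (i). Write $\sigma_n=\frac{1}{n}\sum_{k=1}^{n}u_k$ for the Cesàro means, so that Cesàro summability of $(u_k)$ to $a$ means $\sigma_n\to a$ in $(N,\mu,\nu)$. For $\lambda>1$ put $m=\floor{\lambda n}$ and introduce the intermediate mean
\[
\tau_{n,m}=\frac{1}{m-n}\sum_{k=n+1}^{m}u_k=\frac{m\sigma_m-n\sigma_n}{m-n},
\]
which obeys the two algebraic identities
\[
\tau_{n,m}-u_n=\frac{1}{m-n}\sum_{k=n+1}^{m}(u_k-u_n),\qquad \tau_{n,m}-a=\frac{m}{m-n}(\sigma_m-a)-\frac{n}{m-n}(\sigma_n-a).
\]
Splitting $u_n-a=(u_n-\tau_{n,m})+(\tau_{n,m}-a)$ and distributing the mass as $t=\tfrac{t}{2}+\tfrac{t}{2}$ through axioms (d) and (i), it is enough to make both summands $\mu$-large and $\nu$-small for large $n$.

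First I would record the elementary averaging bound in an $IFNS$: iterating (c) and (d) gives $\mu\!\left(\frac{1}{M}\sum_{j}v_j,t\right)\ge \min_j \mu(v_j,t)$, and iterating (h) and (i) gives the dual bound $\nu\!\left(\frac{1}{M}\sum_{j}v_j,t\right)\le \max_j \nu(v_j,t)$. Applied to the first identity with $v_k=u_k-u_n$, and using $\mu(-v,s)=\mu(v,s)$ from (c), this yields
\[
\mu\!\left(u_n-\tau_{n,m},\tfrac{t}{2}\right)\ge \min_{n<k\le m}\mu\!\left(u_k-u_n,\tfrac{t}{2}\right),\qquad \nu\!\left(u_n-\tau_{n,m},\tfrac{t}{2}\right)\le \max_{n<k\le m}\nu\!\left(u_k-u_n,\tfrac{t}{2}\right).
\]
Given $\varepsilon\in(0,1)$ and $t>0$, slow oscillation (Definition \ref{slowdfn}) now hands over a fixed $\lambda>1$ and an $m_0$ such that $\mu(u_k-u_n,\tfrac{t}{2})>1-\varepsilon$ and $\nu(u_k-u_n,\tfrac{t}{2})<\varepsilon$ whenever $m_0\le n<k\le\floor{\lambda n}$; this controls the first summand uniformly for $n\ge m_0$.

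For the Cesàro summand I would use the homogeneity axiom (c) to rewrite, with the inner split $\tfrac{t}{2}=\tfrac{t}{4}+\tfrac{t}{4}$,
\[
\mu\!\left(\tfrac{m}{m-n}(\sigma_m-a),\tfrac{t}{4}\right)=\mu\!\left(\sigma_m-a,\tfrac{(m-n)t}{4m}\right),\qquad \mu\!\left(\tfrac{n}{m-n}(\sigma_n-a),\tfrac{t}{4}\right)=\mu\!\left(\sigma_n-a,\tfrac{(m-n)t}{4n}\right),
\]
and likewise for $\nu$. The decisive observation is that, with $\lambda$ now fixed, $m=\floor{\lambda n}$ forces $\frac{m-n}{m}\to\frac{\lambda-1}{\lambda}$ and $\frac{m-n}{n}\to\lambda-1$, so both rescaled mass-arguments stay bounded below by a fixed positive constant for all large $n$. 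Since $\mu(u,\cdot)$ is nondecreasing in $t$ (a consequence of (b) and (d)) and $\sigma_m\to a$, $\sigma_n\to a$, this yields $\mu(\tau_{n,m}-a,\tfrac{t}{2})\to1$ and $\nu(\tau_{n,m}-a,\tfrac{t}{2})\to0$. Feeding both estimates into the mass split via (d) and (i) gives $\mu(u_n-a,t)>1-\varepsilon$ and $\nu(u_n-a,t)<\varepsilon$ for all sufficiently large $n$, which is precisely $u_n\to a$.

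The step I expect to be the main obstacle is this bookkeeping of the mass variable $t$ under the homogeneity axiom (c): after committing to the single $\lambda$ produced by slow oscillation, one must verify that the rescaled arguments $\tfrac{(m-n)t}{4m}$ and $\tfrac{(m-n)t}{4n}$ really remain bounded away from $0$, so that Cesàro convergence can be invoked at a fixed positive level (this is where monotonicity of $\mu(u,\cdot)$ is needed to pass from a varying mass to a fixed lower bound). A secondary point of care is the order of quantifiers: the cut-off $\lambda$ depends on $(\varepsilon,t)$, so the Cesàro estimate must be valid for each fixed $\lambda>1$ — which it is, since the coefficients $\frac{m}{m-n}$ and $\frac{n}{m-n}$ are bounded for fixed $\lambda$ — and there is no need to let $\lambda\to1^{+}$.
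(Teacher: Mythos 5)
Your proof is correct. Note that the paper itself does not prove this statement---it is quoted from \cite{yavuzcesaro}---and your argument is essentially the standard one used there and throughout this line of work: the de la Vall\'{e}e Poussin (delayed mean) decomposition $u_n-a=(u_n-\tau_{n,\floor{\lambda n}})+(\tau_{n,\floor{\lambda n}}-a)$, with the oscillation term controlled by Definition \ref{slowdfn} and the mean term by axiom (c), monotonicity of $\mu(u,\cdot)$ and $\nu(u,\cdot)$, and Ces\`{a}ro convergence at a fixed positive mass level; your handling of the fixed $\lambda$ and the bounded rescaled masses $\tfrac{(m-n)t}{4m}$, $\tfrac{(m-n)t}{4n}$ is exactly the right bookkeeping (only the trivial proviso $\floor{\lambda n}>n$ for large $n$ is left implicit).
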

\begin{theorem}\cite{yavuzlog}\label{logtoordinary}
Let sequence $(u_k)$ be in $(N,\mu,\nu)$. If $(u_k)$ is logarithmic summable to some $a\in N$ and slowly oscillating with respect to logarithmic summability, then $(u_k)$ converges to $a$.
\end{theorem}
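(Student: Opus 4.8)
The plan is to reduce the convergence $u_n\to a$ to the given convergence of the logarithmic means together with an oscillation error that the Tauberian hypothesis annihilates. Write the logarithmic transform $\ell_n=\frac{1}{L_n}\sum_{k=1}^{n}\frac{u_k}{k}$ with $L_n=\sum_{k=1}^{n}\frac{1}{k}$, so the hypothesis reads $\ell_n\to a$ in $(N,\mu,\nu)$. First I would record two elementary consequences of the axioms that drive everything. Monotonicity: $\mu(u,\cdot)$ is nondecreasing and $\nu(u,\cdot)$ nonincreasing in $t$, since for $t>s$ the split $u=u+\theta$ with (b), (g) gives $\mu(u,t)\geq\min\{\mu(u,s),\mu(\theta,t-s)\}=\mu(u,s)$ and dually for $\nu$. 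A convex-combination estimate: if $\alpha_k>0$ with $\sum_k\alpha_k=1$, then (c) gives $\mu(\alpha_k v_k,\alpha_k t)=\mu(v_k,t)$, and repeated use of (d), (i) along $t=\sum_k\alpha_k t$ yields
\[
\mu\Big(\sum_k\alpha_k v_k,\,t\Big)\geq\min_k\mu(v_k,t),\qquad
\nu\Big(\sum_k\alpha_k v_k,\,t\Big)\leq\max_k\nu(v_k,t).
\]

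The core device is a moving average at the logarithmic scale. For $m>n$ I would set $\tau_{m,n}=\frac{1}{L_m-L_n}\sum_{k=n+1}^{m}\frac{u_k}{k}$; a direct computation gives the identities
\[
u_n-\tau_{m,n}=\frac{1}{L_m-L_n}\sum_{k=n+1}^{m}\frac{u_n-u_k}{k},\qquad
\tau_{m,n}-\ell_n=\frac{L_m}{L_m-L_n}(\ell_m-\ell_n),
\]
and hence the decomposition $u_n-a=(u_n-\tau_{m,n})+(\tau_{m,n}-\ell_n)+(\ell_n-a)$. The choice matching Definition \ref{logslowdfn} is $m=\floor{n^{\lambda}}$.

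Now fix $t>0$ and $\varepsilon\in(0,1)$ and split $t=\tfrac{t}{3}+\tfrac{t}{3}+\tfrac{t}{3}$; by (d) and (i) it suffices to bound each summand. For $u_n-\tau_{m,n}$, I would invoke Definition \ref{logslowdfn} with parameters $(\tfrac{t}{3},\varepsilon)$ to obtain $\lambda>1$ and $m_0$ with $\mu(u_k-u_n,\tfrac{t}{3})>1-\varepsilon$, $\nu(u_k-u_n,\tfrac{t}{3})<\varepsilon$ for $m_0\leq n<k\leq\floor{n^{\lambda}}$; since the first identity exhibits $u_n-\tau_{m,n}$ as a convex combination of the vectors $u_n-u_k$ with $n<k\leq m=\floor{n^{\lambda}}$ (and $\mu(u_n-u_k,\cdot)=\mu(u_k-u_n,\cdot)$ by (c) with $c=-1$), the convex-combination estimate forces $\mu(u_n-\tau_{m,n},\tfrac{t}{3})>1-\varepsilon$ and $\nu(u_n-\tau_{m,n},\tfrac{t}{3})<\varepsilon$ for $n\geq m_0$. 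The summand $\ell_n-a$ is controlled directly by $\ell_n\to a$. For $\tau_{m,n}-\ell_n$ I would use (c) to write $\mu(\tau_{m,n}-\ell_n,\tfrac{t}{3})=\mu\big(\ell_m-\ell_n,\tfrac{t}{3}\cdot\tfrac{L_m-L_n}{L_m}\big)$ and the split $\ell_m-\ell_n=(\ell_m-a)-(\ell_n-a)$.

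The decisive point — and the reason Definition \ref{logslowdfn} uses the power scale $n^{\lambda}$ rather than the arithmetic scale $\lambda n$ of the Ces\`{a}ro case — is that the factor $\tfrac{L_m-L_n}{L_m}$ must stay bounded away from $0$. I expect this uniform lower bound to be the main obstacle. With $m=\floor{n^{\lambda}}$ and $L_n\sim\ln n$, one gets $\tfrac{L_m-L_n}{L_m}\to\tfrac{\lambda-1}{\lambda}>0$, so this factor exceeds a fixed $c>0$ for all large $n$; monotonicity then gives $\mu\big(\ell_m-\ell_n,\tfrac{t}{3}\cdot\tfrac{L_m-L_n}{L_m}\big)\geq\mu(\ell_m-\ell_n,\tfrac{ct}{3})\to1$ and dually $\nu\to0$, because $\ell_m-\ell_n\to\theta$. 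Had the scale been $\lambda n$, the analogous factor $\tfrac{L_{\lambda n}-L_n}{L_{\lambda n}}\to0$ and this term could not be tamed — which is precisely why Theorem \ref{cesarotoordinary} lives on the arithmetic scale while the present statement requires the logarithmic one. Choosing $n$ beyond the finitely many thresholds produced above makes the three summands simultaneously satisfy the $\mu>1-\varepsilon$, $\nu<\varepsilon$ bounds, so the definition of convergence is met and $u_n\to a$; everything apart from the lower bound on $\tfrac{L_m-L_n}{L_m}$ is routine bookkeeping with the two $IF$-norm facts above.
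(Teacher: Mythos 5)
Your proof is correct: the decomposition $u_n-a=(u_n-\tau_{m,n})+(\tau_{m,n}-\ell_n)+(\ell_n-a)$ with $m=\floor{n^{\lambda}}$, the convex-combination estimate fed by Definition \ref{logslowdfn}, and the uniform lower bound $\frac{L_m-L_n}{L_m}\to\frac{\lambda-1}{\lambda}>0$ are all sound and assemble into a complete argument. Note, though, that this paper does not prove Theorem \ref{logtoordinary} itself (it is imported from \cite{yavuzlog} as a preliminary); your delayed-mean, M\'oricz-style argument is essentially the standard proof from that source, and it matches in spirit the logarithmic-scale manipulations the paper does carry out in Lemma \ref{loglemma} and Theorem \ref{loscillatingtooscillating}.
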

\section{Tauberian theorems for statistical Ces\`{a}ro summability in $IFNS$}
Now, we define statistical Ces\`{a}ro summability method in $IFNS$. For some other studies concerning the concepts of Ces\`{a}ro summability and statistical convergence see \cite{moriczcesaro,nuray1,nuray2,nuray3,yavuzcesaro}.
\begin{definition}
\noindent Let $(u_k)$ be a sequence in $(N,\mu,\nu)$. Ces\`{a}ro means $\sigma_k$ of $(u_k)$ is defined by
\begin{eqnarray*}
\sigma_k=\frac{1}{k}\sum_{j=1}^k u_j.
\end{eqnarray*}
We say that $(u_k)$ is statistically Ces\`{a}ro summable to $a\in N$ if $st_{\mu,\nu}-\lim \sigma=a$.
\end{definition}
In view of Theorem \ref{convergencetost} and \cite[Theorem 3.2]{yavuzcesaro}, convergence implies statistical Ces\`{a}ro summability in $IFNS$. But converse statement is not true in general by the next example.
\begin{example}\label{ex1}
Let
\begin{eqnarray*}
u_k=
\begin{cases}
1+(-1)^k+k^2, \quad &k=n^2\\
1+(-1)^k-(k-1)^2,\quad &k=n^2+1\\
1+(-1)^k,\quad& otherwise
\end{cases}
\end{eqnarray*}
be in $IF-$normed space $(\mathbb{R},\mu_0,\nu_0)$. Sequence $(u_k)$ is neither convergent nor statistically convergent in $(\mathbb{R},\mu_0,\nu_0)$. Besides, it is not Ces\`{a}ro summable.

Let us apply statistical Ces\`{a}ro summability to achieve a limit. Ces\`{a}ro means $(\sigma_k)$ of sequence $(u_k)$ is
\begin{eqnarray*}
\sigma_k=
\begin{cases}
1+\frac{1}{k}\sum_{j=1}^k(-1)^j+k, \quad &k=n^2\\[3mm]
1+\frac{1}{k}\sum_{j=1}^k(-1)^j,\quad& otherwise.
\end{cases}
\end{eqnarray*}
Sequence $(\sigma_k)$ is statistically convergent to $1$ since for each $t>0$ we have $st-\lim\mu_0\left(\sigma_k-1, t\right)=1$ and $st-\lim\nu_0\left(\sigma_k-1, t\right)=0$ where
\begin{eqnarray*}
\mu_0\left(\sigma_k-1, t\right)=
\begin{cases}
\frac{t}{t+\left|\frac{1}{k}\sum_{j=1}^k(-1)^j+k\right|}\ccomma \ &k=n^2\\
\frac{t}{t+\left|\frac{1}{k}\sum_{j=1}^k(-1)^j\right|}\ccomma\ & otherwise
\end{cases}
\ \ and \ \
\nu_0\left(\sigma_k-1, t\right)=
\begin{cases}
\frac{\left|\frac{1}{k}\sum_{j=1}^k(-1)^j+k\right|}{t+\left|\frac{1}{k}\sum_{j=1}^k(-1)^j+k\right|}\ccomma \ &k=n^2\\[5mm]
\frac{\left|\frac{1}{k}\sum_{j=1}^k(-1)^j\right|}{t+\left|\frac{1}{k}\sum_{j=1}^k(-1)^j\right|}\ccomma\ & otherwise.
\end{cases}
\end{eqnarray*}
Hence, sequence $(u_k)$ is statistically Ces\`{a}ro summable to $1$ in $(\mathbb{R},\mu_0,\nu_0)$.
\end{example}
In this section, we will give some Tauberian conditions for statistical Ces\`{a}ro summability to imply convergence in $IFNS$. To this end, firstly we now show that statistical convergence of slowly oscillating sequences yields convergence in $IFNS$.
\begin{theorem}\label{sttoordinary}
Let sequence $(u_k)$ be in $(N,\mu,\nu)$. If $(u_k)$ is statistically convergent to some $a\in N$ and slowly oscillating, then $(u_k)$ is convergent to $a$.
\end{theorem}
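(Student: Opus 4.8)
The plan is to argue by contradiction, using the zero density of the exceptional set together with the slow oscillation to manufacture a whole window of exceptional indices whose density is bounded away from zero. Suppose $(u_k)$ does not converge to $a$. Negating the definition of convergence, there exist $t_0>0$, $\varepsilon_0\in(0,1)$ and an infinite index set on which either $\mu(u_k-a,t_0)\leq 1-\varepsilon_0$ or $\nu(u_k-a,t_0)\geq\varepsilon_0$. Passing to a subsequence $(k_j)$ I may assume that a single one of these two inequalities holds for every $j$; I would treat the $\mu$-alternative first and indicate the symmetric $\nu$-alternative at the end.

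The central step is a propagation lemma: a single exceptional index forces its entire forward window into the exceptional set. I would apply Definition \ref{slowdfn} with parameters $t_0/2$ and $\varepsilon_0/2$ to get $\lambda>1$ and $m_0$, and discard the finitely many $k_j<m_0$. For any $k$ with $k_j<k\leq\floor{\lambda k_j}$, slow oscillation gives $\mu(u_k-u_{k_j},t_0/2)>1-\varepsilon_0/2$, and by property (c) with $c=-1$ the same bound holds for $\mu(u_{k_j}-u_k,t_0/2)$. Now if some such $k$ satisfied $\mu(u_k-a,t_0/2)>1-\varepsilon_0/2$, then applying property (d) to $u_{k_j}-a=(u_{k_j}-u_k)+(u_k-a)$ with the split $t_0/2+t_0/2$ would yield $\mu(u_{k_j}-a,t_0)>1-\varepsilon_0/2\geq 1-\varepsilon_0$, contradicting the choice of $k_j$. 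Hence every $k$ in the window $(k_j,\floor{\lambda k_j}]$ satisfies $\mu(u_k-a,t_0/2)\leq 1-\varepsilon_0/2$, i.e. lies in the exceptional set at level $(t_0/2,\varepsilon_0/2)$.

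With propagation in hand the density contradiction is routine. Setting $n_j=\floor{\lambda k_j}$, the window contributes at least $\floor{\lambda k_j}-k_j\sim(\lambda-1)k_j$ exceptional indices not exceeding $n_j$, so the counting fraction at $n_j$ is at least $(\lambda-1)k_j/\floor{\lambda k_j}$, which tends to $(\lambda-1)/\lambda>0$ as $j\to\infty$. This contradicts statistical convergence of $(u_k)$ to $a$, which forces that fraction to tend to $0$. The $\nu$-alternative is handled identically, replacing (d) by (i): from $\nu(u_{k_j}-a,t_0)\geq\varepsilon_0$ and $\nu(u_{k_j}-u_k,t_0/2)<\varepsilon_0/2$ the inequality $\max\{\nu(u_{k_j}-u_k,t_0/2),\nu(u_k-a,t_0/2)\}\geq\nu(u_{k_j}-a,t_0)\geq\varepsilon_0$ forces $\nu(u_k-a,t_0/2)\geq\varepsilon_0$ on the whole window, again producing positive density.

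The hard part will be the propagation step, since it must be carried out entirely through the $\min$/$\max$ triangle inequalities (d) and (i) rather than an additive norm inequality; the scales must be split as $t_0/2+t_0/2$ and the tolerance taken as $\varepsilon_0/2$ precisely so that the resulting bound still contradicts the defining property of $k_j$. Everything else — extracting the subsequence, discarding indices below $m_0$, and the final counting estimate — is bookkeeping.
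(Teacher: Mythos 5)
Your argument is correct, but it is genuinely different from the one in the paper. The paper proves the theorem directly: applying (the proof of) M\'oricz's Lemma~6 to the real sequence $\mu(u_k-a,t/2)$, it extracts a subsequence $1\le l_1<l_2<\cdots$ that is multiplicatively dense (for any $\lambda>1$ one has $l_m<l_{m+1}<\lambda l_m$ eventually) and along which $\mu(u_{l_m}-a,t/2)\to 1$, and then uses slow oscillation to transfer the bound from $u_{l_m}$ to every $k\in(l_m,l_{m+1}]$, obtaining $\mu(u_k-a,t)>1-\varepsilon$ for all large $k$ (and similarly for $\nu$). You argue by contraposition instead: a single exceptional index $k_j$ at level $(t_0,\varepsilon_0)$, combined with slow oscillation at level $(t_0/2,\varepsilon_0/2)$ and the axioms (c),(d) (resp.\ (h),(i)), forces the whole window $(k_j,\lfloor\lambda k_j\rfloor]$ to be exceptional at level $(t_0/2,\varepsilon_0/2)$, so along $n_j=\lfloor\lambda k_j\rfloor$ the exceptional set has relative size at least $(\lfloor\lambda k_j\rfloor-k_j)/\lfloor\lambda k_j\rfloor\to(\lambda-1)/\lambda>0$, contradicting the zero-density condition defining $st_{\mu,\nu}$-convergence. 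Both proofs rest on the same mechanism (slow oscillation controls differences over windows of multiplicative length $\lambda$, and density-zero sets are incompatible with such windows), but your version is self-contained --- it needs neither the external subsequence-extraction lemma of M\'oricz nor Theorem~\ref{karakusthm}, working straight from the definition of statistical convergence --- at the price of being indirect; the paper's route is constructive in the sense that it exhibits an explicit tail $k>l_{m_3}$ beyond which the convergence estimates hold. The only points to polish in your write-up are routine: note that the windows are nonempty for $k_j$ large (so the count $\lfloor\lambda k_j\rfloor-k_j\ge(\lambda-1)k_j-1$ is eventually positive), and state explicitly that the window indices exceed $m_0$ so the slow-oscillation estimate applies.
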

\begin{proof}
Let $st_{\mu,\nu}-\lim u=a$ and sequence $(u_k)$ be slowly oscillating. Then by Theorem \ref{karakusthm}, for every $t>0$ we have $st-\lim \mu\left(u_k-a,t\right)=1$ and $st-\lim \nu\left(u_k-a,t\right)=0$. Our aim to show that $\lim_{k\to\infty} \mu\left(u_k-a,t\right)=1$ and $\lim_{k\to\infty} \nu\left(u_k-a,t\right)=0$.

Fix $t>0$. Since $st-\lim \mu\left(u_k-a,\frac{t}{2}\right)=1$, from the proof of \cite[Lemma 6]{moriczcesaro} there is a subsequence of integers $1\leq l_1<l_2<\cdots$ such that for any $\lambda>1$ inequality $l_m<l_{m+1}<\lambda l_m$ holds for large enough $m$; and
\begin{eqnarray*}
\lim_{m\to\infty}\mu\left(u_{l_m}-a,\frac{t}{2}\right)=1.
\end{eqnarray*}
So, for given $\varepsilon>0$ there exists $m_1$ such that
\begin{eqnarray}\label{sttoordinary-1}
\mu\left(u_{l_m}-a,\frac{t}{2}\right)>1-\varepsilon \qquad whenever\qquad m>m_1.
\end{eqnarray}
Besides, since $(u_k)$ is slowly oscillating there exist $m_2$ and $\lambda >1$ such that
\begin{eqnarray}\label{sttoordinary-2}
\mu\left(u_k-u_{l_m},\frac{t}{2}\right)>1-\varepsilon\qquad whenever\qquad m_2\leq l_m< k\leq \lambda l_m.
\end{eqnarray}
It follows from (\ref{sttoordinary-1})--(\ref{sttoordinary-2}) that
\begin{eqnarray*}
\mu\left(u_k-a,t\right)\geq\min\left\{\mu\left(u_k-u_{l_m},\frac{t}{2}\right),\mu\left(u_{l_m}-a,\frac{t}{2}\right)\right\}>1-\varepsilon
\end{eqnarray*}
holds for $l_m<k\leq l_{m+1}$ where $m>\max\{m_1, m_2\}$. By considering all $m$'s, we get
\begin{eqnarray*}
\mu\left(u_k-a,t\right)>1-\varepsilon \qquad for \qquad k>l_{m_3},
\end{eqnarray*}
where  $m_3=\max\{m_1,m_2\}$. Hence $\lim_{k\to\infty} \mu\left(u_k-a,t\right)=1$ is proved. The proof of $\lim_{k\to\infty} \nu\left(u_k-a,t\right)=0$ can be done similarly.
\end{proof}
We need next lemma to prove main theorem of this section.
\begin{lemma}\label{lemma}
Let $(u_k)$ be slowly oscillating sequence in $(N,\mu,\nu)$. Let $t$ be an arbitrary but fixed positive number. Then, for each $\varepsilon>0$ followings hold:
{\small\begin{eqnarray}\label{lemma1}
\mu\left(u_n-u_k,\left\{1+\frac{\ln\left(2n/k\right)}{\ln\lambda}\right\}t\right)>1-\varepsilon,\quad \nu\left(u_n-u_k,\left\{1+\frac{\ln\left(2n/k\right)}{\ln\lambda}\right\}t\right)<\varepsilon
\end{eqnarray}}
and
{\small\begin{eqnarray}\label{lemma2}
\mu\left(\frac{1}{n}\sum_{k=m_0}^{\floor{n/\lambda}}(u_n-u_k),\left\{1+\frac{\ln2+1}{\ln\lambda}\right\}t\right)>1-\varepsilon, \quad \nu\left(\frac{1}{n}\sum_{k=m_0}^{\floor{n/\lambda}}(u_n-u_k),\left\{1+\frac{\ln2+1}{\ln\lambda}\right\}t\right)<\varepsilon
\end{eqnarray}}
where $m_0\leq k\leq n/\lambda$, and $m_0=m_0(t,\varepsilon)$ and $\lambda=\lambda(t,\varepsilon)$ are from definition of slow oscillation.
\end{lemma}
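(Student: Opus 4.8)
The plan is to rely on two elementary consequences of the $IF$-norm axioms. First, \emph{monotonicity}: taking $w=\theta$ in axiom (d) and using axiom (b) gives $\mu(u,t+s)\ge\mu(u,t)$ for $s>0$, so $\mu$ is nondecreasing in $t$, and dually $\nu$ is nonincreasing. Second, the \emph{generalized triangle inequality} obtained by iterating (d) and (i): for any $x_1,\dots,x_p$ and radii $t_1,\dots,t_p$,
\[
\mu\Big(\sum_{i=1}^p x_i,\sum_{i=1}^p t_i\Big)\ge\min_i\mu(x_i,t_i),\qquad \nu\Big(\sum_{i=1}^p x_i,\sum_{i=1}^p t_i\Big)\le\max_i\nu(x_i,t_i).
\]
Together with the homogeneity axiom (c), these are exactly what convert a chain of short-range slow-oscillation estimates into a long-range one, and a sum into its average.

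First I would prove (\ref{lemma1}). Fix $m_0$ and $\lambda$ from Definition \ref{slowdfn}, enlarging $m_0$ so that $m_0\ge 2\lambda/(\lambda-1)$; this guarantees that the construction below strictly decreases and stays above $m_0$. For $m_0\le k\le n$ I build a chain $k=i_0<i_1<\dots<i_p=n$ whose consecutive pairs respect the multiplicative gap, $i_{j}\le\floor{\lambda\, i_{j-1}}$, so that slow oscillation applies to each link and yields $\mu(u_{i_j}-u_{i_{j-1}},t)>1-\varepsilon$. Concretely, descending from $n$ by the rule $j\mapsto\lceil j/\lambda\rceil$ until the index first drops to or below $k$, then joining the last index above $k$ directly to $k$, produces such a chain; the final link is legitimate since $\lceil j/\lambda\rceil\le k$ forces $j\le\lambda k$. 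A short computation with the recursion $\lceil j/\lambda\rceil\le j/\lambda+1$ shows the number of links is at most $1+\log_\lambda(2n/k)=1+\ln(2n/k)/\ln\lambda$, the factor $2$ being precisely the slack that absorbs the ceiling/floor rounding. Telescoping $u_n-u_k=\sum_j(u_{i_j}-u_{i_{j-1}})$ and applying the generalized triangle inequality with every radius equal to $t$ gives $\mu(u_n-u_k,p\,t)>1-\varepsilon$ with $p\le 1+\ln(2n/k)/\ln\lambda$; monotonicity of $\mu$ then upgrades $p\,t$ to the stated radius. The $\nu$-estimate is identical with $\min,>$ replaced by $\max,<$.

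Next I would deduce (\ref{lemma2}) by averaging. Write $s_k=\{1+\ln(2n/k)/\ln\lambda\}\,t$ and apply (\ref{lemma1}) to each term $u_n-u_k$ for $m_0\le k\le\floor{n/\lambda}$. Absorbing the factor $1/n$ through homogeneity (c) and then invoking the generalized triangle inequality,
\[
\mu\Big(\tfrac1n\!\!\sum_{k=m_0}^{\floor{n/\lambda}}\!\!(u_n-u_k),\ \tfrac1n\!\!\sum_{k=m_0}^{\floor{n/\lambda}}\!\! s_k\Big)\ge\min_{m_0\le k\le\floor{n/\lambda}}\mu(u_n-u_k,s_k)>1-\varepsilon .
\]
It remains to verify the scalar estimate $\tfrac1n\sum_{k=m_0}^{\floor{n/\lambda}}s_k\le\{1+(\ln2+1)/\ln\lambda\}\,t$, after which monotonicity of $\mu$ replaces the average radius on the left by the larger stated radius (and dually for $\nu$). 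This scalar bound follows from a routine integral comparison: with $M=\floor{n/\lambda}$ one has $\sum_{k=1}^{M}\ln(2n/k)=M\ln(2n)-\ln(M!)\le M\{\ln(2n/M)+1\}$, while $M/n\le1/\lambda$ and $\ln(2n/M)\le\ln2+\ln\lambda+o(1)$; substituting these reduces the claim to the inequality $(2-\lambda)\ln\lambda\le(\ln2+1)(\lambda-1)$, which holds for all $\lambda>1$ and is exactly what makes the constant $1+(\ln2+1)/\ln\lambda$ appear.

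The main obstacle is the bookkeeping around the integer floors: one must check that the descending chain of (\ref{lemma1}) genuinely satisfies $i_j\le\floor{\lambda\, i_{j-1}}$ and never falls below $m_0$ (forcing the mild enlargement of $m_0$), and that the constants $\ln 2$ and $+1$ survive the integral estimate of (\ref{lemma2}) uniformly for large $n$. Everything else—the triangle inequality, homogeneity, and monotonicity—is mechanical once these counting and constant-tracking steps are pinned down.
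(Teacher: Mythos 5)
Your proposal is correct and follows essentially the same route as the paper: the same Móricz-style multiplicative chain with at most $1+\ln(2n/k)/\ln\lambda$ links gives \eqref{lemma1}, and the generalized triangle inequality plus homogeneity and monotonicity of $\mu,\nu$ in the radius gives \eqref{lemma2}. The only (harmless) deviation is in the scalar bound for the sum of radii: you estimate the partial sum up to $\floor{n/\lambda}$ directly, which forces the inequality $(2-\lambda)\ln\lambda\le(\ln2+1)(\lambda-1)$ and your ``large $n$'' caveat, whereas the paper simply enlarges the sum to $k=1,\dots,n$ and uses $\sum_{k=2}^{n}\ln k>\int_{1}^{n}\ln x\,dx$, which yields the constant $1+(\ln 2+1)/\ln\lambda$ for every $n$ without any asymptotics.
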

\begin{proof}
Let $(u_k)$ be slowly oscillating sequence in $(N,\mu,\nu)$, and $m_0$ and $\lambda>1$ be from Definition\ref{slowdfn}. Let t be an arbitrary but fixed positive number. Fix $m_0\leq k\leq n/\lambda$. Consider the sequence(see \cite[Proof of Lemma 8]{moriczcesaro})
\begin{eqnarray*}
n_0:=n, \qquad n_p:=1+ \floor*{\frac{n_{p-1}}{\lambda}}, \qquad p=1,2,\ldots, q+1,
\end{eqnarray*}
where $q$ is determined by the condition
\begin{eqnarray*}
n_{q+1}\leq k<n_q.
\end{eqnarray*}
Then, we get
\begin{eqnarray*}
1-\varepsilon&<&\min\left\{\mu(u_n-u_{n_1},t),\mu(u_{n_1}-u_{n_2},t),\cdots,\mu(u_{n_q}-u_{k},t)\right\}
\\&\leq&
\mu\left(u_n-u_k,(q+1)t\right)
\\&\leq&
\mu\left(u_n-u_k,\left\{1+\frac{\ln\left(2n/k\right)}{\ln\lambda}\right\}t\right)
\end{eqnarray*}
and
\begin{eqnarray*}
\varepsilon&>&\max\left\{\nu(u_n-u_{n_1},t),\nu(u_{n_1}-u_{n_2},t),\cdots,\nu(u_{n_q}-u_{k},t)\right\}\geq \nu\left(u_n-u_k,\left\{1+\frac{\ln\left(2n/k\right)}{\ln\lambda}\right\}t\right)
\end{eqnarray*}
by virtue of the inequality $q\leq\frac{1}{\ln \lambda}\ln\left(\frac{2 n}{ k}\right)$ which was calculated in \cite[Proof of Lemma 8]{moriczcesaro}. This proves \eqref{lemma1}.

On the other hand by using \eqref{lemma1} we have:
\begin{eqnarray*}
1-\varepsilon&<&\min\limits_{m_0\leq k \leq \floor{n/\lambda}}\mu\left(u_n-u_k,\left\{1+\frac{\ln\left(2n/k\right)}{\ln\lambda}\right\}t\right)
\\&\leq&
\mu\left(\sum_{k=m_0}^{\floor{n/\lambda}}(u_n-u_k),\left\{\sum_{k=m_0}^{\floor{n/\lambda}}\left(1+\frac{\ln\left(2n/k\right)}{\ln\lambda}\right)\right\}t\right)
\\&\leq&
\mu\left(\sum_{k=m_0}^{\floor{n/\lambda}}(u_n-u_k),\left\{\sum_{k=1}^n\left(1+\frac{\ln\left(2n/k\right)}{\ln\lambda}\right)\right\}t\right)
\\&\leq&
\mu\left(\sum_{k=m_0}^{\floor{n/\lambda}}(u_n-u_k),\left\{1+\frac{\ln2+1}{\ln\lambda}\right\}nt\right)
\\&=&
\mu\left(\frac{1}{n}\sum_{k=m_0}^{\floor{n/\lambda}}(u_n-u_k),\left\{1+\frac{\ln2+1}{\ln\lambda}\right\}t\right)
\end{eqnarray*}
and
\begin{eqnarray*}
\varepsilon>\max\limits_{m_0\leq k \leq \floor{n/\lambda}}\nu\left(u_n-u_k,\left\{1+\frac{\ln\left(2n/k\right)}{\ln\lambda}\right\}t\right)
\geq \nu\left(\frac{1}{n}\sum_{k=m_0}^{\floor{n/\lambda}}(u_n-u_k),\left\{1+\frac{\ln2+1}{\ln\lambda}\right\}t\right)
\end{eqnarray*}
by virtue of $\sum_{k=2}^n\ln k>\int_1^n\ln(x)dx$, which proves \eqref{lemma2}
\end{proof}
\begin{theorem}\label{coscillatingtooscillating}
Let sequence $(u_k)$ be in $(N,\mu,\nu)$. If $(u_k)$ is slowly oscillating, then sequence $(\sigma_k)$ of Ces\`{a}ro means is also slowly oscillating.
\end{theorem}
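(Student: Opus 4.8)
The plan is to reduce the oscillation of the means $(\sigma_k)$ to that of $(u_k)$ through the elementary identity
\begin{equation*}
\sigma_k-\sigma_n=\frac1k\sum_{j=n+1}^{k}(u_j-u_n)+\frac{k-n}{k}(u_n-\sigma_n),\qquad n<k,
\end{equation*}
which I would verify by writing $k\sigma_k-n\sigma_n=\sum_{j=n+1}^k u_j$ and collecting terms. Fixing $t>0$ and $\varepsilon\in(0,1)$, I split $t=\tfrac t2+\tfrac t2$ and apply axiom (d) (resp.\ (i)) to get $\mu(\sigma_k-\sigma_n,t)\geq\min\{\mu(A,\tfrac t2),\mu(B,\tfrac t2)\}$, where $A$ and $B$ denote the two summands. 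It then suffices to make each summand fuzzy-small, and the remaining task is to exhibit the witnesses $\lambda'>1$ and $m_0'$ for slow oscillation of $(\sigma_k)$.

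The first summand $A=\frac1k\sum_{j=n+1}^k(u_j-u_n)$ I would control directly from slow oscillation of $(u_k)$. By the scaling axiom (c), $\mu(A,\tfrac t2)=\mu\big(\sum_{j=n+1}^k(u_j-u_n),\tfrac{kt}{2}\big)$, and distributing the argument evenly among the $k-n\le k$ terms gives, through (d) and the monotonicity of $\mu(u,\cdot)$, the bound $\mu(A,\tfrac t2)\geq\min_{n<j\leq k}\mu\big(u_j-u_n,\tfrac{kt}{2(k-n)}\big)\geq\min_{n<j\leq k}\mu(u_j-u_n,\tfrac t2)$. Hence, if $\lambda'$ is taken no larger than the slow-oscillation parameter attached to the threshold $\tfrac t2$ and to $\varepsilon$, every pair $n<j\leq k\leq\floor{\lambda' n}$ satisfies $\mu(u_j-u_n,\tfrac t2)>1-\varepsilon$, so $\mu(A,\tfrac t2)>1-\varepsilon$ for $n\geq m_0'$; the estimate $\nu(A,\tfrac t2)<\varepsilon$ is identical using (h) and (i).

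The essential point is the second summand $B=\frac{k-n}{k}(u_n-\sigma_n)$. By (c), $\mu(B,\tfrac t2)=\mu\big(u_n-\sigma_n,\tfrac{k}{k-n}\cdot\tfrac t2\big)\geq\mu\big(u_n-\sigma_n,\tfrac{\lambda'}{\lambda'-1}\cdot\tfrac t2\big)$, since $\tfrac{k-n}{k}\leq\tfrac{\lambda'-1}{\lambda'}$ on the range $n<k\leq\floor{\lambda' n}$. Thus it is enough to show that $(u_n-\sigma_n)$ is fuzzy-bounded, i.e.\ that there exist a width $W>0$ and an index $M$ with $\mu(u_n-\sigma_n,W)>1-\varepsilon$ and $\nu(u_n-\sigma_n,W)<\varepsilon$ for $n\geq M$; one then selects $\lambda'>1$ so close to $1$ that $\tfrac{\lambda'}{\lambda'-1}\cdot\tfrac t2\geq W$, forcing $\mu(B,\tfrac t2)>1-\varepsilon$. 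To establish this boundedness I would write $u_n-\sigma_n=\frac1n\sum_{j=1}^n(u_n-u_j)$ and split the range of $j$ into three blocks: $1\leq j<m_0$, $\;m_0\leq j\leq\floor{n/\lambda}$, and $\floor{n/\lambda}<j\leq n$, where $m_0,\lambda$ are the slow-oscillation data. The middle block is exactly Lemma \ref{lemma}, giving width $\big\{1+\frac{\ln2+1}{\ln\lambda}\big\}t$ via \eqref{lemma2}; on the last block each pair satisfies $j<n\leq\floor{\lambda j}$, so slow oscillation bounds every $u_n-u_j$ and the weighted average stays above level $1-\varepsilon$ at width $t$; the first block is a fixed number $m_0-1$ of terms whose prefactor $1/n$ overwhelms the at-most-logarithmic growth of $u_n-u_j$ guaranteed by \eqref{lemma1}, so it tends to $\theta$ and is negligible for large $n$. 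Combining the three blocks through (d) yields the required $W$ as a multiple of $t$ depending on $\lambda$, and the $\nu$-statements follow by the same partition using (h) and (i).

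The main obstacle I anticipate is precisely this fuzzy-boundedness of $u_n-\sigma_n$: slow oscillation only compares indices lying in a fixed ratio window, whereas $\sigma_n$ averages all the way down to index $1$, so one must chain the slow-oscillation estimates geometrically — this is where the factor $\ln(2n/k)/\ln\lambda$ of Lemma \ref{lemma} enters — and separately dispose of the initial block, whose contribution is killed by the $1/n$ weight. Once $u_n-\sigma_n$ is seen to be fuzzy-bounded, choosing $\lambda'$ small enough to satisfy the constraints coming from both $A$ and $B$, together with a correspondingly large $m_0'$, completes the proof.
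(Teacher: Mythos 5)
Your proposal is correct and is essentially the paper's own argument: your identity $\sigma_k-\sigma_n=\frac{1}{k}\sum_{j=n+1}^{k}(u_j-u_n)+\frac{k-n}{k}(u_n-\sigma_n)$ coincides with the paper's decomposition (since $\frac{k-n}{k}(u_n-\sigma_n)=\frac{k-n}{kn}\sum_{j=1}^{n}(u_n-u_j)$), and your three-block treatment of $u_n-\sigma_n$ (initial block killed by the $1/n$ weight via \eqref{lemma1}, middle block via \eqref{lemma2}, final block handled by slow oscillation directly) is exactly how the paper proceeds. The only divergence is bookkeeping at the end: you decouple the oscillation window $\lambda'$ of $(\sigma_k)$ from the slow-oscillation parameter $\lambda$ of $(u_k)$ and shrink $\lambda'$ until $\frac{\lambda'}{\lambda'-1}\cdot\frac{t}{2}$ exceeds your fixed width $W$, whereas the paper keeps a single $\lambda\in(1,2)$, works at threshold $t/16$, and absorbs the factor $\lambda-1$ through the inequality $\frac{\lambda-1}{\ln\lambda}(\ln\lambda+\ln 2+1)<4$ --- both closings are sound.
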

\begin{proof}
Let sequence $(u_k)$ be slowly oscillating and $\sigma_k=\frac{1}{k}\sum_{j=1}^ku_j$. Fix $t>0$. For given $\varepsilon>0$ there exists $m_0,m_1\in\mathbb{N}$ and $1<\lambda<2$ such that
\begin{itemize}
\item  $\mu(u_k-u_n,t/16)>1-\varepsilon$ and $\nu(u_k-u_n,t/16)<\varepsilon$ whenever $m_0\leq n< k\leq\floor{\lambda n}$.
\item
\begin{equation*}
\mu\left(\dfrac{k-n}{kn}\sum\limits_{j=1}^{m_0-1}(u_n-u_j),\dfrac{t}{4}\right)>1-\varepsilon \quad \textrm{and} \quad \nu\left(\dfrac{k-n}{kn}\sum\limits_{j=1}^{m_0-1}(u_n-u_j),\dfrac{t}{4}\right)<\varepsilon
\end{equation*}
whenever $m_1\leq n< k\leq\floor{\lambda n}$, by virtue of inequalities in \eqref{lemma1}.
\end{itemize}
Then, for $\max\{m_0,m_1\}\leq n< k\leq\floor{\lambda n}$, we get
\begin{eqnarray*}
\mu(\sigma_k-\sigma_n,t)&=&\mu\left(\frac{k-n}{kn}\sum_{j=1}^{n}(u_n-u_j)+\frac{1}{k}\sum_{j=n+1}^{k}(u_j-u_n),t\right)
\\&\geq&
\min\left\{\mu\left(\frac{k-n}{kn}\sum_{j=1}^{m_0-1}(u_n-u_j),\frac{t}{4}\right),\mu\left(\frac{k-n}{kn}\sum_{j=m_0}^{\floor{n/\lambda}}(u_n-u_j),\frac{t}{4}\right),\right.
\\&&\hspace{2cm}
\left.\mu\left(\frac{k-n}{kn}\sum_{j=\floor{n/\lambda}+1}^{n}\!\!\!\!\!(u_n-u_j),\frac{t}{4}\right),\mu\left(\frac{1}{k}\sum_{j=n+1}^{k}(u_j-u_n),\frac{t}{4}\right)\right\}
\\&\geq&
\min\left\{\mu\left(\frac{k-n}{kn}\sum_{j=1}^{m_0-1}(u_n-u_j),\frac{t}{4}\right),\mu\left(\frac{k-n}{kn}\sum_{j=m_0}^{\floor{n/\lambda}}(u_n-u_j),\frac{t}{4}\right),\right.
\\&&\hspace{2cm}
\left.\min_{\floor{n/\lambda}\leq j\leq n}\mu\left(u_n-u_j,\frac{t}{4}\right),\min_{n+1\leq j\leq k}\mu\left(u_j-u_n,\frac{t}{4}\right)\right\}
\\&>&
\min\left\{\mu\left(\frac{k-n}{kn}\sum_{j=m_0}^{\floor{n/\lambda}}(u_n-u_j),\frac{t}{4}\right), 1-\varepsilon\right\}
\\&\geq&
\min\left\{\mu\left(\frac{1}{n}\sum_{j=m_0}^{\floor{n/\lambda}}(u_n-u_j),\frac{t}{4(\lambda-1)}\right), 1-\varepsilon\right\}
\\&\geq&
\min\left\{\mu\left(\frac{1}{n}\sum_{j=m_0}^{\floor{n/\lambda}}(u_n-u_j),\left\{1+\frac{\ln2+1}{\ln\lambda}\right\}\frac{t}{16}\right), 1-\varepsilon\right\}
\\&\geq&
1-\varepsilon
\end{eqnarray*}
by virtue of the facts that $\frac{k-n}{k}<\lambda-1$ and $\frac{\lambda-1}{\ln\lambda}(\ln\lambda+\ln2+1)<4$, and of Lemma \ref{lemma}. On the other hand, $\nu(\sigma_k-\sigma_n,t)<\varepsilon$ can be shown similarly. Hence, the proof is completed.
\end{proof}
Now we give the main theorem of this section.
\begin{theorem}\label{cesaromain}
Let sequence $(u_k)$ be in $(N,\mu,\nu)$. If $(u_k)$ is statistically Ces\`{a}ro summable to some $a\in N$ and slowly oscillating, then $(u_k)$ is convergent to $a$.
\end{theorem}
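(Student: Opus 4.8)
The plan is to chain together the three preceding results so that the main theorem follows by a short deduction, with essentially no new estimates required. First I would unwind the hypothesis: statistical Ces\`{a}ro summability of $(u_k)$ to $a$ is by definition the assertion that the sequence of Ces\`{a}ro means $(\sigma_k)$ satisfies $st_{\mu,\nu}-\lim\sigma=a$; that is, $(\sigma_k)$ is statistically convergent to $a$ in $(N,\mu,\nu)$. Separately, the slow oscillation of $(u_k)$ is available as a hypothesis and will be used twice.

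Next, since $(u_k)$ is slowly oscillating, Theorem \ref{coscillatingtooscillating} guarantees that the Ces\`{a}ro mean sequence $(\sigma_k)$ is itself slowly oscillating. Thus $(\sigma_k)$ is a slowly oscillating sequence that is statistically convergent to $a$, which are precisely the hypotheses of Theorem \ref{sttoordinary}. Applying that theorem to $(\sigma_k)$ in place of $(u_k)$ yields that $(\sigma_k)$ converges, in the ordinary sense, to $a$. But convergence of the Ces\`{a}ro means is exactly ordinary Ces\`{a}ro summability of $(u_k)$, so at this stage $(u_k)$ is Ces\`{a}ro summable to $a$.

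Finally I would invoke Theorem \ref{cesarotoordinary}: the sequence $(u_k)$ is now known to be Ces\`{a}ro summable to $a$, and it is slowly oscillating by hypothesis, so $(u_k)$ converges to $a$. This closes the argument.

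Because every ingredient has already been established, there is no genuine analytic obstacle left in this final step; the substance of the argument was front-loaded into the two auxiliary results. The delicate point is really Theorem \ref{sttoordinary}, which upgrades statistical convergence to ordinary convergence for slowly oscillating sequences via the subsequence $(l_m)$ extracted from \cite[Lemma 6]{moriczcesaro}, together with Theorem \ref{coscillatingtooscillating}, which transfers slow oscillation from $(u_k)$ to its Ces\`{a}ro means using the splitting and the logarithmic bounds of Lemma \ref{lemma}. Once those are in hand, the main theorem collapses to a clean composition of known implications, and the only thing to verify carefully is that the limit value $a$ is preserved at each stage of the chain.
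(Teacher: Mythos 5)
Your proposal is correct and follows exactly the same route as the paper's own proof: use Theorem \ref{coscillatingtooscillating} to transfer slow oscillation to $(\sigma_k)$, apply Theorem \ref{sttoordinary} to get $\sigma_k\to a$ (ordinary Ces\`{a}ro summability), and conclude via Theorem \ref{cesarotoordinary}. Nothing is missing.
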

\begin{proof}
Let $(u_k)$ be statistically Ces\`{a}ro summable to some $a\in N$ and slowly oscillating. Then, by Theorem \ref{coscillatingtooscillating} sequence $(\sigma_k)$ of Ces\`{a}ro means is also slowly oscillating. From Theorem \ref{sttoordinary}, $(\sigma_k)\to a$. This means that $(u_k)$ is Ces\`{a}ro summable to $a$. Since $(u_k)$ is slowly oscillating, from Theorem \ref{cesarotoordinary} $(u_k)$ is convergent to $a$.
\end{proof}
In view of Theorem \ref{cboundedtoslowly} and Theorem \ref{cesaromain} we get following theorem.
\begin{theorem}\label{cesaromainbnd}
Let sequence $(u_k)$ be in $(N,\mu,\nu)$. If $(u_k)$ is  statistically Ces\`{a}ro summable to $a\in N$ and  $\{k(u_{k}-u_{k-1})\}$ is q-bounded, then  $(u_k)$ converges to $a$.
\end{theorem}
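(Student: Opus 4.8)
The plan is to treat this statement as a direct corollary of the two immediately preceding results and simply chain them together; no new machinery is needed. The strategy is to use the q-boundedness hypothesis to produce slow oscillation, and then invoke the main theorem of the section, which already converts statistical Ces\`{a}ro summability plus slow oscillation into ordinary convergence.

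First I would observe that the assumption ``$\{k(u_{k}-u_{k-1})\}$ is q-bounded'' is precisely the premise of Theorem \ref{cboundedtoslowly}. Applying that theorem at once yields that the sequence $(u_k)$ is slowly oscillating in the sense of Definition \ref{slowdfn}. Thus the somewhat concrete Hardy-type condition on the weighted differences is upgraded into the qualitative oscillation control that the later theorems are phrased in terms of.

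Next I would feed this conclusion into Theorem \ref{cesaromain}. By hypothesis $(u_k)$ is statistically Ces\`{a}ro summable to $a$, and by the previous step $(u_k)$ is slowly oscillating, so both hypotheses of Theorem \ref{cesaromain} are now satisfied. That theorem then delivers exactly the desired conclusion, namely that $(u_k)$ converges to $a$ in $(N,\mu,\nu)$, completing the argument.

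I do not expect any genuine obstacle here, since both of the implications being chained have already been proved earlier in the excerpt. The only point requiring a moment's care is consistency of definitions: the notion of slow oscillation produced by Theorem \ref{cboundedtoslowly} must be the very same one that Theorem \ref{cesaromain} consumes (as opposed to the logarithmic variant of Definition \ref{logslowdfn}), and indeed both refer to Definition \ref{slowdfn}, so the two results compose without friction.
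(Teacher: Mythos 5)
Your proposal is correct and matches the paper exactly: the paper states this theorem as an immediate consequence of Theorem \ref{cboundedtoslowly} (q-boundedness of $\{k(u_k-u_{k-1})\}$ implies slow oscillation) combined with Theorem \ref{cesaromain} (statistical Ces\`{a}ro summability plus slow oscillation implies convergence). Your remark about using the Ces\`{a}ro rather than the logarithmic notion of slow oscillation is the right consistency check, and nothing further is needed.
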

In some cases of sequences as in Example \ref{ex2}, first order Ces\`{a}ro means fails to converge both ordinarily and statistically in $IFNS$. To handle such sequences, we consider higher order Ces\`{a}ro means and define statistical H\"{o}lder summability method in $IFNS$. In the sequel, we give corresponding Tauberian theorems as corollaries.
\begin{definition}
Let $(u_k)$ be in $(N,\mu,\nu)$. $m$-th order H\"{o}lder means $H^m_k$ of $(u_k)$ is defined by
\begin{eqnarray*}
H^m_k=\frac{1}{k}\sum_{j=1}^{k}H^{m-1}_j,
\end{eqnarray*}
where $H_k^0=u_k$. Sequence $(u_k)$ is said to be $(H,m)$ summable to $a\in N$ if $\lim_{k\to\infty}H^{m}_k=a$ and
it is said to be statistically $(H,m)$ summable to $a$ if $st_{\mu,\nu}-\lim H^m=a$
\end{definition}
\begin{example}\label{ex2}
Let
\begin{eqnarray*}
u_k=
\begin{cases}
(-1)^kk^2+k^4, \quad &k=n^2\\
(-1)^kk^2-(k-1)^4-(k-1)^3k-(k-1)^2k^2,\quad &k=n^2+1\\
(-1)^kk^2+3(k-2)^2(k-1)^2,\quad &k=n^2+2\\
(-1)^kk^2-(k-3)^2(k-2)(k-1),\quad &k=n^2+3\\
(-1)^kk^2,\quad& otherwise
\end{cases}
\end{eqnarray*}
be in $IF-$normed space $(\mathbb{R},\mu_0,\nu_0)$ where $n\geq2$. Sequence $(u_k)$ is neither convergent nor statistically convergent in $(\mathbb{R},\mu_0,\nu_0)$. Furthermore, it is neither Ces\`{a}ro summable nor statistically Ces\`{a}ro summable.

Let us apply statistical $(H,3)$ summability to achieve a limit. H\"{o}lder means $(H^1_k)$, $(H^2_k)$ and $(H^3_k)$ of sequence $(u_k)$ are
\begin{eqnarray*}
H^1_k=
\begin{cases}
\sigma^{\mathsmaller{(1)}}_k+k^3, \quad &k=n^2\\
\sigma^{\mathsmaller{(1)}}_k-(k-1)^2(2k-1),\quad &k=n^2+1\\
\sigma^{\mathsmaller{(1)}}_k+(k-2)^2(k-1),\quad &k=n^2+2\\
\sigma^{\mathsmaller{(1)}}_k,\quad& otherwise
\end{cases}
\end{eqnarray*}
\begin{eqnarray*}
H^2_k=
\begin{cases}
\sigma^{\mathsmaller{(2)}}_k+k^2, \quad &k=n^2\\
\sigma^{\mathsmaller{(2)}}_k-(k-1)^2,\quad &k=n^2+1\\
\sigma^{\mathsmaller{(2)}}_k,\quad& otherwise
\end{cases}
\end{eqnarray*}
\begin{eqnarray*}
H^3_k=
\begin{cases}
\sigma^{\mathsmaller{(3)}}_k+k, \quad &k=n^2\\
\sigma^{\mathsmaller{(3)}}_k,\quad& otherwise
\end{cases}
\end{eqnarray*}
where sequence $\left\{\sigma^{\mathsmaller{(m)}}_k\right\}$ denotes $m-$fold Ces\`{a}ro means of sequence $\left\{(-1)^kk^2\right\}$ and $n\geq2$. Sequence $(H^3_k)$ is statistically convergent to 0 since for each $t>0$ we have $st-\lim\mu_0\left(H^3_k, t\right)=1$ and $st-\lim\nu_0\left(H^3_k, t\right)=0$ where
\begin{eqnarray*}
\mu_0\left(H^3_k, t\right)=
\begin{cases}
\frac{t}{t+\left|\sigma^{\mathsmaller{(3)}}_k+k\right|}\ccomma \quad &k=n^2\\[5mm]
\frac{t}{t+\left|\sigma^{\mathsmaller{(3)}}_k\right|}\ccomma\quad& otherwise
\end{cases}
\qquad and \qquad
\nu_0\left(H^3_k, t\right)=
\begin{cases}
\frac{\left|\sigma^{\mathsmaller{(3)}}_k+k\right|}{t+\left|\sigma^{\mathsmaller{(3)}}_k+k\right|}\ccomma \quad &k=n^2\\[5mm]
\frac{\left|\sigma^{\mathsmaller{(3)}}_k\right|}{t+\left|\sigma^{\mathsmaller{(3)}}_k\right|}\ccomma\quad& otherwise.
\end{cases}
\end{eqnarray*}
in view of the fact that $\lim_{k\to\infty}\sigma^{\mathsmaller{(3)}}_k=0$. Hence, sequence $(u_k)$ is statistically $(H,3)$ summable to 0 in $(\mathbb{R},\mu_0,\nu_0)$.
\end{example}
In view of Theorem \ref{cesarotoordinary}, Theorem \ref{coscillatingtooscillating} and Theorem \ref{cesaromain}  we get following Tauberian theorem.
\begin{theorem}
Let sequence $(u_k)$ be in $(N,\mu,\nu)$. If $(u_k)$ is statistically $(H,m)$ summable to some $a\in N$ and slowly oscillating, then $(u_k)$ is convergent to $a$.
\end{theorem}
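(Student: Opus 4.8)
The plan is to prove the statement by induction on the order $m$, using exactly the three quoted results. The base case $m=1$ is precisely Theorem \ref{cesaromain}: statistical $(H,1)$ summability is by definition statistical Ces\`{a}ro summability, so a slowly oscillating sequence that is statistically $(H,1)$ summable to $a$ converges to $a$.

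For the inductive step I would first record the elementary index identity relating the H\"{o}lder means of $(u_k)$ to those of its Ces\`{a}ro transform. Writing $\sigma=(\sigma_k)$ for the Ces\`{a}ro means of $(u_k)$, a direct computation from the recursion $H^j_k=\frac1k\sum_{i=1}^k H^{j-1}_i$ shows that the $j$-th H\"{o}lder means of $\sigma$ equal the $(j{+}1)$-st H\"{o}lder means of $(u_k)$; in particular $H^{m-1}_k(\sigma)=H^m_k(u)$ for every $k$. Consequently, the assertion that $(u_k)$ is statistically $(H,m)$ summable to $a$ is exactly the assertion that $\sigma$ is statistically $(H,m-1)$ summable to $a$.

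Next I would transfer the Tauberian hypothesis to $\sigma$. Since $(u_k)$ is slowly oscillating, Theorem \ref{coscillatingtooscillating} shows that $\sigma$ is slowly oscillating as well. Thus $\sigma$ is a slowly oscillating sequence that is statistically $(H,m-1)$ summable to $a$, and the induction hypothesis yields $\sigma\to a$. But $\sigma\to a$ means precisely that $(u_k)$ is (ordinarily) Ces\`{a}ro summable to $a$. Invoking Theorem \ref{cesarotoordinary} on the slowly oscillating, Ces\`{a}ro summable sequence $(u_k)$ then gives $u_k\to a$, completing the induction.

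The only point requiring care is the index bookkeeping in the identity $H^{m-1}_k(\sigma)=H^m_k(u)$, which is the mechanism that lets the induction lower the order by one. Beyond that there is no genuine analytic obstacle, since the substantive work has already been done: the propagation of slow oscillation under Ces\`{a}ro averaging (Theorem \ref{coscillatingtooscillating}) and the first-order statistical Tauberian theorem (Theorem \ref{cesaromain}) are exactly what the induction repackages.
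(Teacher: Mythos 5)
Your proposal is correct and is essentially the argument the paper intends: the paper offers no separate proof, deriving the statement ``in view of'' Theorems \ref{cesarotoordinary}, \ref{coscillatingtooscillating} and \ref{cesaromain}, and your induction on $m$ via the identity $H^{m-1}_k(\sigma)=H^{m}_k(u)$ is precisely the natural unrolling of that combination (slow oscillation propagates to the iterated means by Theorem \ref{coscillatingtooscillating}, the top level is settled by Theorem \ref{cesaromain}, and Theorem \ref{cesarotoordinary} carries the convergence back down). The bookkeeping identity you single out is indeed valid, since the $j$-th H\"{o}lder mean is just the $j$-fold Ces\`{a}ro transform, so no gap remains.
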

Also, in view of theorem above and Theorem \ref{cboundedtoslowly} we get following theorem.
\begin{theorem}
Let sequence $(u_k)$ be in $(N,\mu,\nu)$. If $(u_k)$ is statistically $(H,m)$ summable to some $a\in N$ and $\{k(u_{k}-u_{k-1})\}$ is q-bounded, then $(u_k)$ is convergent to $a$.
\end{theorem}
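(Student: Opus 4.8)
The plan is to deduce this statement directly from the two results on which it is built, so that no new analytic estimate is required. The key observation is that the hypothesis on $\{k(u_{k}-u_{k-1})\}$ is precisely the one handled by Theorem \ref{cboundedtoslowly}, while the desired conclusion is exactly what the immediately preceding theorem delivers for slowly oscillating sequences. The proof therefore reduces to chaining these two implications.

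Concretely, I would first apply Theorem \ref{cboundedtoslowly}: since $\{k(u_{k}-u_{k-1})\}$ is assumed to be q-bounded, that theorem guarantees that $(u_k)$ is slowly oscillating in the sense of Definition \ref{slowdfn}. Having thus upgraded the q-boundedness hypothesis to slow oscillation, I would then invoke the preceding theorem, whose two hypotheses---statistical $(H,m)$ summability to $a$ together with slow oscillation---are now both in force; its conclusion is that $(u_k)$ converges to $a$, which is exactly the assertion to be proved.

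There is no genuine obstacle here, since all the substantive work is already contained in the cited results. The only point worth a word of care is a bookkeeping one: Theorem \ref{cboundedtoslowly} yields slow oscillation in the sense of Definition \ref{slowdfn}, and this is the same notion of slow oscillation demanded by the preceding theorem (which inherits it, via Theorem \ref{coscillatingtooscillating} and Theorem \ref{cesarotoordinary}, from the Ces\`{a}ro theory). Because the two occurrences of ``slowly oscillating'' coincide, the composition of the two implications is legitimate and the argument closes.
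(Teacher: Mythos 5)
Your argument is correct and is exactly the paper's route: the paper states this result as an immediate consequence of Theorem \ref{cboundedtoslowly} (q-boundedness of $\{k(u_k-u_{k-1})\}$ implies slow oscillation) combined with the preceding theorem on statistically $(H,m)$ summable, slowly oscillating sequences. Your bookkeeping remark that both occurrences of ``slowly oscillating'' refer to Definition \ref{slowdfn} is accurate, so the composition is legitimate.
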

\section{Tauberian theorems for statistical logarithmic summability in $IFNS$}
We define statistical logarithmic summability method in $IFNS$ as the following.
\begin{definition}
\noindent Let sequence $(u_k)$ be in $(N,\mu,\nu)$. Logarithmic mean $\tau_k$ of $(u_k)$ is defined by
\begin{eqnarray*}
\tau_k=\frac{1}{\ell_k}\sum_{j=1}^k \frac{u_j}{j} \quad where\quad \ell_k=\sum_{j=1}^k\frac{1}{j}\ \cdot
\end{eqnarray*}
$(u_k)$ is said to be statistically logarithmic summable to $a\in N$ if $st_{\mu,\nu}-\lim \tau=a$.
\end{definition}
In view of Theorem \ref{convergencetost} and \cite[Theorem 2.2]{yavuzlog}, convergence implies statistical logarithmic summability in $IFNS$. But converse statement is not true in general by the next example.
\begin{example}\label{ex3}
Consider the vector space $C[0,1]$ equipped with the norm $\Vert f\Vert=\max_{x\in[0,1]}|f(x)|$. Let
\begin{eqnarray*}
f_k(x)=
\begin{cases}
(-x)^kk+k\left(\ell_k\right)^2, \quad &k=n^2\\
(-x)^kk-k\left(\ell_{k-1}\right)^2,\quad &k=n^2+1\\
(-x)^kk,\quad& otherwise
\end{cases}
\end{eqnarray*}
be in $IF-$normed space $(C[0,1],\mu_0,\nu_0)$ where $\ell_k=\sum_{j=1}^k 1/j$. Sequence $(f_k)$ is neither convergent nor statistically convergent in $(C[0,1],\mu_0,\nu_0)$. Besides, $(f_k)$ is neither Ces\`{a}ro summable nor logarithmic summable. Furthermore, $(f_k)$ is not statistically Ces\`{a}ro summable which we have defined in previous section.

Let us apply statistical logarithmic summability to achieve a limit. Logarithmic means $(\tau_k)$ of sequence $(f_k)$ is
\begin{eqnarray*}
\tau_k(x)=
\begin{cases}
\frac{1}{\ell_k}\sum_{j=1}^k(-x)^j+\ell_k, \quad &k=n^2\\[3mm]
\frac{1}{\ell_k}\sum_{j=1}^k(-x)^j,\quad& otherwise.
\end{cases}
\end{eqnarray*}
Hence,  $(f_k)$ is statistically logarithmic summable to 0 since for each $t>0$ we have $st-\lim\mu_0\left(\tau_k, t\right)=1$ and $st-\lim\nu_0\left(\tau_k, t\right)=0$ where
\begin{eqnarray*}
\mu_0\left(\tau_k, t\right)=
\begin{cases}
\frac{t}{t+\left\Vert\frac{1}{\ell_k}\sum\limits_{j=1}^k(-x)^j+\ell_k\right\Vert}\ccomma \quad &k=n^2\\[5mm]
\frac{t}{t+\left\Vert\frac{1}{\ell_k}\sum\limits_{j=1}^k(-x)^j\right\Vert}\ccomma\quad& otherwise
\end{cases}
\quad and \quad
\nu_0\left(\tau_k, t\right)=
\begin{cases}
\frac{\left\Vert\frac{1}{\ell_k}\sum\limits_{j=1}^k(-x)^j+\ell_k\right\Vert}{t+\left\Vert\frac{1}{\ell_k}\sum\limits_{j=1}^k(-x)^j+\ell_k\right\Vert}\ccomma \quad &k=n^2\\[8mm]
\frac{\left\Vert\frac{1}{\ell_k}\sum\limits_{j=1}^k(-x)^j\right\Vert}{t+\left\Vert\frac{1}{\ell_k}\sum\limits_{j=1}^k(-x)^j\right\Vert}\ccomma\quad& otherwise.
\end{cases}
\end{eqnarray*}
\end{example}
In this section, we will give some Tauberian conditions for statistical logarithmic summability to imply convergence in $IFNS$. To this end, firstly we now show that statistical convergence of slowly oscillating sequences with respect to logarithmic summability implies convergence in $IFNS$.
\begin{theorem}\label{logsttoordinary}
Let sequence $(u_k)$ be in $(N,\mu,\nu)$. If $(u_k)$ is statistically convergent to some $a\in N$ and slowly oscillating with respect to logarithmic summability, then $(u_k)$ is convergent to $a$.
\end{theorem}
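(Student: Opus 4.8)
The plan is to mirror the proof of Theorem~\ref{sttoordinary}, since the only structural change is that slow oscillation now controls the differences $u_k-u_n$ over the exponential range $m_0\le n<k\le\floor{n^{\lambda}}$ (Definition~\ref{logslowdfn}) rather than over the multiplicative range $m_0\le n<k\le\floor{\lambda n}$. First I would invoke Theorem~\ref{karakusthm} to translate the hypothesis $st_{\mu,\nu}-\lim u=a$ into $st-\lim\mu(u_k-a,t)=1$ and $st-\lim\nu(u_k-a,t)=0$ for each $t$, reducing the goal to proving $\lim_{k\to\infty}\mu(u_k-a,t)=1$ and $\lim_{k\to\infty}\nu(u_k-a,t)=0$ for a fixed but arbitrary $t>0$. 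Exactly as in the Cesàro case, from $st-\lim\mu(u_k-a,\tfrac{t}{2})=1$ the proof of \cite[Lemma 6]{moriczcesaro} supplies a subsequence $1\le l_1<l_2<\cdots$ with $l_{m+1}/l_m\to1$ (equivalently, for every $\lambda>1$ the inequality $l_m<l_{m+1}<\lambda l_m$ holds for large $m$) along which $\mu(u_{l_m}-a,\tfrac{t}{2})\to1$. Fixing $\varepsilon>0$, this yields $m_1$ with $\mu(u_{l_m}-a,\tfrac{t}{2})>1-\varepsilon$ for $m>m_1$.

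Next I would bring in slow oscillation with respect to logarithmic summability: for the chosen $t,\varepsilon$ there are $\lambda>1$ and $m_2$ with $\mu(u_k-u_{l_m},\tfrac{t}{2})>1-\varepsilon$ whenever $m_2\le l_m<k\le\floor{l_m^{\lambda}}$. The one new point is to verify that each block $(l_m,l_{m+1}]$ eventually lies inside such an admissible range, i.e.\ that $l_{m+1}\le l_m^{\lambda}$ for all large $m$. Applying the subsequence property with this same $\lambda$ gives $l_{m+1}<\lambda l_m$ for large $m$, and since $\lambda<l_m^{\lambda-1}$ once $l_m$ is large (because $l_m^{\lambda-1}\to\infty$), we obtain $l_{m+1}<\lambda l_m<l_m^{\lambda}$. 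Hence, for $l_m<k\le l_{m+1}$ with $m>\max\{m_1,m_2\}$, property (d) of the $IF$-norm gives
\[
\mu(u_k-a,t)\ge\min\left\{\mu\!\left(u_k-u_{l_m},\tfrac{t}{2}\right),\mu\!\left(u_{l_m}-a,\tfrac{t}{2}\right)\right\}>1-\varepsilon,
\]
and since these blocks exhaust all sufficiently large $k$, this yields $\lim_{k\to\infty}\mu(u_k-a,t)=1$. The companion statement $\lim_{k\to\infty}\nu(u_k-a,t)=0$ follows in the same way, using property (i) of the $IF$-norm in place of (d).

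I expect the only genuine point needing care to be the gap verification $l_{m+1}\le l_m^{\lambda}$, which replaces the verbatim use of the window $\floor{\lambda l_m}$ in Theorem~\ref{sttoordinary}. In fact this renders the logarithmic case \emph{no harder} than the Cesàro one: because the exponential window $\floor{n^{\lambda}}$ is far wider than the multiplicative window $\floor{\lambda n}$ for large $n$, the very same Móricz subsequence (whose successive ratios tend to $1$) already lies densely enough for slow oscillation to bridge each block, so no new subsequence construction is required. The remaining manipulations are precisely the routine $\min$/$\max$ estimates already carried out in the proof of Theorem~\ref{sttoordinary}.
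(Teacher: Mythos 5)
Your argument is correct and follows essentially the same route as the paper's proof: reduce via Theorem \ref{karakusthm}, extract a M\'oricz-type subsequence along which $\mu(u_{l_m}-a,\tfrac{t}{2})\to1$ with controlled gaps, and bridge each block $(l_m,l_{m+1}]$ using slow oscillation with respect to logarithmic summability. The only deviation is that the paper cites \cite[Lemma 10]{moriczlog} to obtain a subsequence satisfying $j_m<j_{m+1}<j_m^{\lambda}$ directly, whereas you reuse the Ces\`{a}ro subsequence of \cite[Lemma 6]{moriczcesaro} with $l_{m+1}<\lambda l_m$ and correctly observe that $\lambda l_m<l_m^{\lambda}$ for all large $l_m$, so the same blocks fit inside the exponential windows.
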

\begin{proof}
Let $st_{\mu,\nu}-\lim u=a$ and sequence $(u_k)$ be slowly oscillating with respect to logarithmic summability. Then by Theorem \ref{karakusthm}, for every $t>0$ we have $st-\lim \mu\left(u_k-a,t\right)=1$ and $st-\lim \nu\left(u_k-a,t\right)=0$. Our aim to show that $\lim_{k\to\infty} \mu\left(u_k-a,t\right)=1$ and $\lim_{k\to\infty} \nu\left(u_k-a,t\right)=0$.

Fix $t>0$. Since $st-\lim \mu\left(u_k-a,\frac{t}{2}\right)=1$, from the proof of \cite[Lemma 10]{moriczlog} there is a subsequence of integers $1\leq j_1<j_2<\cdots$ such that for any $\lambda>1$ inequality $j_m<j_{m+1}<j^{\lambda}_m$ holds for large enough $m$; and
\begin{eqnarray*}
\lim_{m\to\infty}\mu\left(u_{j_m}-a,\frac{t}{2}\right)=1.
\end{eqnarray*}
So, for $\varepsilon>0$ there exists $m_1$ so that
\begin{eqnarray}\label{logsttoordinary-1}
\mu\left(u_{j_m}-a,\frac{t}{2}\right)>1-\varepsilon \qquad whenever\qquad m>m_1.
\end{eqnarray}
Besides, since $(u_k)$ is slowly oscillating with respect to logarithmic summability there exist $m_2$ and $\lambda >1$ such that
\begin{eqnarray}\label{logsttoordinary-2}
\mu\left(u_k-u_{j_m},\frac{t}{2}\right)>1-\varepsilon\qquad whenever\qquad m_2\leq j_m< k\leq j^{\lambda}_m.
\end{eqnarray}
It follows from (\ref{logsttoordinary-1})--(\ref{logsttoordinary-2}) that
\begin{eqnarray*}
\mu\left(u_k-a,t\right)\geq\min\left\{\mu\left(u_k-u_{j_m},\frac{t}{2}\right),\mu\left(u_{j_m}-a,\frac{t}{2}\right)\right\}>1-\varepsilon
\end{eqnarray*}
holds for $j_m<k\leq j_{m+1}$ where $m>\max\{m_1, m_2\}$. By considering all $m$'s, we get
\begin{eqnarray*}
\mu\left(u_k-a,t\right)>1-\varepsilon \qquad for \qquad k>j_{m_3},
\end{eqnarray*}
where  $m_3=\max\{m_1,m_2\}$. Hence $\lim_{k\to\infty} \mu\left(u_k-a,t\right)=1$ is proved. The proof of $\lim_{k\to\infty} \nu\left(u_k-a,t\right)=0$ can be done similarly.
\end{proof}
We need the next lemma to prove main theorem of this section.
\begin{lemma}\label{loglemma}
Let sequence $(u_k)$ be slowly oscillating with respect to logarithmic summability in $(N,\mu,\nu)$. Let $t$ be an arbitrary but fixed positive number. Then, for each $\varepsilon>0$ followings hold:
{\small\begin{eqnarray}\label{loglemma1}
\mu\left(u_n-u_k,\left\{1+\frac{1}{\ln\lambda}\ln\left(\frac{2\ln n}{\ln k}\right)\right\}t\right)>1-\varepsilon,\quad \nu\left(u_n-u_k,\left\{1+\frac{1}{\ln\lambda}\ln\left(\frac{2\ln n}{\ln k}\right)\right\}t\right)<\varepsilon
\end{eqnarray}}
and
{\small\begin{eqnarray}\label{loglemma2}
\mu\left(\frac{1}{\ell_n}\sum_{k=m_0}^{\floor{n^{1/\lambda}}}\!\!\left(\frac{u_n-u_k}{k}\right),\left\{1+\frac{\ln2+2}{\ln\lambda}\right\}t\right)>1-\varepsilon, \quad \nu\left(\frac{1}{\ell_n}\sum_{k=m_0}^{\floor{n^{1/\lambda}}}\!\!\left(\frac{u_n-u_k}{k}\right),\left\{1+\frac{\ln2+2}{\ln\lambda}\right\}t\right)<\varepsilon
\end{eqnarray}}%
where $1<m_0\leq k\leq n^{1/\lambda}$, and $m_0=m_0(t,\varepsilon)$ and $\lambda=\lambda(t,\varepsilon)$ are from definition of slow oscillation with respect to logarithmic summability.
\end{lemma}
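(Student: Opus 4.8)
The plan is to follow the proof of Lemma~\ref{lemma} line by line, replacing the arithmetic contraction $n\mapsto n/\lambda$ by the power-law contraction $n\mapsto n^{1/\lambda}$ that Definition~\ref{logslowdfn} dictates.

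For \eqref{loglemma1} I would fix $1<m_0\leq k\leq n^{1/\lambda}$ and build the decreasing chain $n_0:=n$, $n_p:=1+\floor{n_{p-1}^{1/\lambda}}$, stopped at the index $q$ determined by $n_{q+1}\leq k<n_q$; this is the logarithmic counterpart of the chain of \cite[Proof of Lemma 10]{moriczlog}. Because $n_p>n_{p-1}^{1/\lambda}$ forces $n_{p-1}<n_p^{\lambda}$, every consecutive pair (and the final pair $n_q,k$) falls inside the window of Definition~\ref{logslowdfn}, so $\mu(u_{n_{p-1}}-u_{n_p},t)>1-\varepsilon$ and $\nu(u_{n_{p-1}}-u_{n_p},t)<\varepsilon$ throughout. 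Telescoping $u_n-u_k$ into these $q+1$ differences and iterating axioms (d) and (i) gives $\mu(u_n-u_k,(q+1)t)>1-\varepsilon$ and $\nu(u_n-u_k,(q+1)t)<\varepsilon$. Since $\mu(u,\cdot)$ is nondecreasing and $\nu(u,\cdot)$ nonincreasing in the second variable (immediate from (d),(i) and the values of $\mu,\nu$ at $\theta$), the bound $q\leq\tfrac{1}{\ln\lambda}\ln(2\ln n/\ln k)$ computed in \cite[Proof of Lemma 10]{moriczlog} lets me enlarge the argument from $(q+1)t$ to $\{1+\tfrac{1}{\ln\lambda}\ln(2\ln n/\ln k)\}t$, which is \eqref{loglemma1}.

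For \eqref{loglemma2} I would average these differences with the logarithmic weights. Rescaling \eqref{loglemma1} by $c=1/k$ through axioms (c),(h) gives $\mu(\tfrac{u_n-u_k}{k},\tfrac{1}{k}\{1+\tfrac{1}{\ln\lambda}\ln(2\ln n/\ln k)\}t)>1-\varepsilon$ and the matching $\nu$-inequality; combining over $m_0\leq k\leq\floor{n^{1/\lambda}}$ via (d),(i) then bounds $\mu$ and $\nu$ of $\sum_k\tfrac{u_n-u_k}{k}$ at the scalar argument $t\sum_{k=m_0}^{\floor{n^{1/\lambda}}}\tfrac{1}{k}\{1+\tfrac{1}{\ln\lambda}\ln(2\ln n/\ln k)\}$. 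As each summand is nonnegative and $m_0\geq2$, I may overestimate this scalar sum by its extension to $2\leq k\leq n$; the task then reduces to showing $\sum_{k=2}^n\tfrac{1}{k}\{1+\tfrac{1}{\ln\lambda}\ln(2\ln n/\ln k)\}\leq\{1+\tfrac{\ln2+2}{\ln\lambda}\}\ell_n$. Dividing the argument by $\ell_n$ through (c),(h) and using monotonicity once more yields \eqref{loglemma2}.

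I expect this final scalar estimate to be the one genuinely new step. Splitting $\ln(2\ln n/\ln k)=\ln2+\ln(\ln n/\ln k)$, the terms $\sum_{k=2}^n\tfrac1k$ and $\tfrac{\ln2}{\ln\lambda}\sum_{k=2}^n\tfrac1k$ produce the $\{1+\tfrac{\ln2}{\ln\lambda}\}$ part (up to the harmless $\ell_n-1$ versus $\ell_n$), and the remaining $\tfrac{1}{\ln\lambda}\sum_{k=2}^n\tfrac1k\ln(\ln n/\ln k)$ carries the difficulty: the seemingly divergent $\ln\ln n$ must cancel. Comparing the decreasing summand with its integral and using the antiderivative $\int\tfrac{\ln\ln x}{x}\,dx=\ln x\,\ln\ln x-\ln x$ gives $\sum_{k=2}^n\tfrac1k\ln(\ln n/\ln k)\leq\ln n+O(\ln\ln n)\leq 2\ell_n$ for large $n$, which furnishes exactly the extra $\tfrac{2}{\ln\lambda}$; the passage from the Ces\`{a}ro constant $\ln2+1$ to $\ln2+2$ is precisely the slack needed to absorb the $O(\ln\ln n/\ln n)$ correction. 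This double-logarithmic integral, replacing the single $\int_1^n\ln x\,dx$ of Lemma~\ref{lemma}, is the only place demanding real care.
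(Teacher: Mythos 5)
Your proposal takes essentially the same route as the paper: for \eqref{loglemma1} the M\'oricz-type chain $n_0=n$, $n_p=1+\floor{n_{p-1}^{1/\lambda}}$ with the bound $q\leq\tfrac{1}{\ln\lambda}\ln\bigl(\tfrac{2\ln n}{\ln k}\bigr)$ and iterated use of axioms (d),(i), and for \eqref{loglemma2} the same rescaling by $1/k$, summation, enlargement of the scalar sum to $2\leq k\leq n$, the bound $\sum_{k=2}^{n}\tfrac1k\bigl\{1+\tfrac{1}{\ln\lambda}\ln\bigl(\tfrac{2\ln n}{\ln k}\bigr)\bigr\}\leq\bigl\{1+\tfrac{\ln 2+2}{\ln\lambda}\bigr\}\ell_n$, and division by $\ell_n$ via axioms (c),(h). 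The only difference is cosmetic: you verify that last scalar estimate explicitly with the antiderivative $\ln x\,\ln\ln x-\ln x$ (which the paper asserts without detail), and your hedge ``for large $n$'' can be dropped since the integral comparison in fact gives the bound for all $n$ in the admissible range.
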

\begin{proof}
Let sequence $(u_k)$ be slowly oscillating with respect to logarithmic summability in $(N,\mu,\nu)$, and $1<m_0$ and $\lambda>1$ be from Definition \ref{logslowdfn}. Let $t$ be an arbitrary but fixed positive number. Fix $m_0\leq k\leq n^{1/\lambda}$. Consider the sequence(see \cite[Proof of Lemma 5]{moriczlog})
\begin{eqnarray*}
n_0:=n, \qquad n_p:=1+ \floor*{n^{1/\lambda}_{p-1}}, \qquad p=1,2,\ldots, q+1,
\end{eqnarray*}
where $q$ is determined by the condition
\begin{eqnarray*}
n_{q+1}\leq k<n_q.
\end{eqnarray*}
Then, we get
\begin{eqnarray*}
1-\varepsilon&<&\min\left\{\mu(u_n-u_{n_1},t),\mu(u_{n_1}-u_{n_2},t),\cdots,\mu(u_{n_q}-u_{k},t)\right\}
\\&\leq&
\mu\left(u_n-u_k,(q+1)t\right)
\\&\leq&
\mu\left(u_n-u_k,\left\{1+\frac{1}{\ln\lambda}\ln\left(\frac{2\ln n}{\ln k}\right)\right\}t\right)
\end{eqnarray*}
and
\begin{eqnarray*}
\varepsilon&>&\max\left\{\nu(u_n-u_{n_1},t),\nu(u_{n_1}-u_{n_2},t),\cdots,\nu(u_{n_q}-u_{k},t)\right\}\geq \nu\left(u_n-u_k,\left\{1+\frac{1}{\ln\lambda}\ln\left(\frac{2\ln n}{\ln k}\right)\right\}t\right)
\end{eqnarray*}
by virtue of the inequality $q\leq\frac{1}{\ln\lambda}\ln\left(\frac{2\ln n}{\ln k}\right)$ which was calculated in \cite[Proof of Lemma 5]{moriczlog}. This proves \eqref{loglemma1}.

On the other hand by using \eqref{loglemma1} we have:
\begin{eqnarray*}
1-\varepsilon&<&\min\limits_{m_0\leq k \leq \floor{n^{1/\lambda}}}\mu\left(u_n-u_k,\left\{1+\frac{1}{\ln\lambda}\ln\left(\frac{2\ln n}{\ln k}\right)\right\}t\right)
\\&\leq&
\mu\left(\sum_{k=m_0}^{\floor{n^{1/\lambda}}}\!\!\left(\frac{u_n-u_k}{k}\right),\left\{\sum_{k=m_0}^{\floor{n^{1/\lambda}}}\frac{1}{k}\left(1+\frac{1}{\ln\lambda}\ln\left(\frac{2\ln n}{\ln k}\right)\right)\right\}t\right)
\\&\leq&
\mu\left(\sum_{k=m_0}^{\floor{n^{1/\lambda}}}\!\!\left(\frac{u_n-u_k}{k}\right),\left\{\sum_{k=2}^n\frac{1}{k}\left(1+\frac{1}{\ln\lambda}\ln\left(\frac{2\ln n}{\ln k}\right)\right)\right\}t\right)
\\&\leq&
\mu\left(\sum_{k=m_0}^{\floor{n^{1/\lambda}}}\!\!\left(\frac{u_n-u_k}{k}\right),\left\{1+\frac{\ln2+2}{\ln\lambda}\right\}\ell_nt\right)
\\&=&
\mu\left(\frac{1}{\ell_n}\sum_{k=m_0}^{\floor{n^{1/\lambda}}}\!\!\left(\frac{u_n-u_k}{k}\right),\left\{1+\frac{\ln2+2}{\ln\lambda}\right\}t\right)
\end{eqnarray*}
and
\begin{eqnarray*}
\varepsilon>\max\limits_{m_0\leq k \leq \floor{n^{1/\lambda}}}\nu\left(u_n-u_k,\left\{1+\frac{1}{\ln\lambda}\ln\left(\frac{2\ln n}{\ln k}\right)\right\}t\right)
\geq \nu\left(\frac{1}{\ell_n}\sum_{k=m_0}^{\floor{n^{1/\lambda}}}\!\!\left(\frac{u_n-u_k}{k}\right),\left\{1+\frac{\ln2+2}{\ln\lambda}\right\}t\right)
\end{eqnarray*}
which proves \eqref{loglemma2}.
\end{proof}
\begin{theorem}\label{loscillatingtooscillating}
Let sequence $(u_k)$ be in $(N,\mu,\nu)$. If $(u_k)$ is slowly oscillating with respect to logarithmic summability, then sequence $(\tau_k)$ of logarithmic means is also slowly oscillating with respect to logarithmic summability.
\end{theorem}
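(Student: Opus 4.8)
The plan is to mirror the proof of Theorem~\ref{coscillatingtooscillating}, replacing the arithmetic length $n$ by the logarithmic length $\ell_n$ throughout. The starting point is the decomposition
\begin{equation*}
\tau_k-\tau_n=\frac{\ell_k-\ell_n}{\ell_k\ell_n}\sum_{j=1}^{n}\frac{u_n-u_j}{j}+\frac{1}{\ell_k}\sum_{j=n+1}^{k}\frac{u_j-u_n}{j},
\end{equation*}
which one gets by writing $\tau_k-\tau_n=\left(\frac{1}{\ell_k}-\frac{1}{\ell_n}\right)\sum_{j=1}^{n}\frac{u_j}{j}+\frac{1}{\ell_k}\sum_{j=n+1}^{k}\frac{u_j}{j}$ and inserting the telescoping constant $u_n$ so that every summand turns into a difference $u_n-u_j$ or $u_j-u_n$ that slow oscillation can control. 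Fix $t>0$ and $\varepsilon\in(0,1)$. I would first choose $\lambda>1$ close enough to $1$ that $\frac{\lambda-1}{\ln\lambda}(\ln\lambda+\ln2+2)<4$ (possible since the left-hand side tends to $\ln2+2<4$ as $\lambda\to1^{+}$), and then invoke slow oscillation with respect to logarithmic summability at the reduced scale $t/16$ to obtain $m_0$ with $\mu(u_k-u_n,t/16)>1-\varepsilon$ and $\nu(u_k-u_n,t/16)<\varepsilon$ whenever $m_0\leq n<k\leq\floor{n^{\lambda}}$; a further threshold $m_1$ takes care of the fixed initial block $\sum_{j=1}^{m_0-1}$ via \eqref{loglemma1}.

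Next I would split the first sum at the indices $m_0-1$ and $\floor{n^{1/\lambda}}$ and apply properties (c)--(d) of the $IF$-norm to bound $\mu(\tau_k-\tau_n,t)$ below by the minimum of four quantities, each at scale $t/4$: the initial block $\frac{\ell_k-\ell_n}{\ell_k\ell_n}\sum_{j=1}^{m_0-1}\frac{u_n-u_j}{j}$, the central block $\frac{\ell_k-\ell_n}{\ell_k\ell_n}\sum_{j=m_0}^{\floor{n^{1/\lambda}}}\frac{u_n-u_j}{j}$, the boundary block $\frac{\ell_k-\ell_n}{\ell_k\ell_n}\sum_{j=\floor{n^{1/\lambda}}+1}^{n}\frac{u_n-u_j}{j}$, and the tail $\frac{1}{\ell_k}\sum_{j=n+1}^{k}\frac{u_j-u_n}{j}$. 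For the two boundary-type blocks the key observation is the logarithmic equivalence $j>n^{1/\lambda}\Leftrightarrow n<j^{\lambda}$, which places both pairs $(j,n)$ (with $j\leq n$) and $(n,j)$ (with $j\leq k\leq\floor{n^{\lambda}}$) inside the slow-oscillation window; hence each individual $\mu(u_n-u_j,t/16)$ and $\mu(u_j-u_n,t/16)$ exceeds $1-\varepsilon$, and since the coefficient sums $\frac{\ell_k-\ell_n}{\ell_k\ell_n}(\ell_n-\ell_{\floor{n^{1/\lambda}}})$ and $\frac{\ell_k-\ell_n}{\ell_k}$ do not exceed $1$ for large $n$, the generalized triangle inequality pushes both blocks above $1-\varepsilon$. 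The initial block is $>1-\varepsilon$ by the choice of $m_1$.

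The only genuinely new work sits in the central block. Writing it as $\frac{\ell_k-\ell_n}{\ell_k}\cdot\frac{1}{\ell_n}\sum_{j=m_0}^{\floor{n^{1/\lambda}}}\frac{u_n-u_j}{j}$ and using property (c) together with monotonicity of $\mu(\cdot,s)$ in $s$, I would bound it below by $\mu\left(\frac{1}{\ell_n}\sum_{j=m_0}^{\floor{n^{1/\lambda}}}\frac{u_n-u_j}{j},\frac{t}{4(\lambda-1)}\right)$, which is legitimate as soon as $\frac{\ell_k-\ell_n}{\ell_k}<\lambda-1$; Lemma~\ref{loglemma}, inequality \eqref{loglemma2}, then yields a value $>1-\varepsilon$ precisely because the chosen $\lambda$ arranges $\left\{1+\frac{\ln2+2}{\ln\lambda}\right\}\frac{t}{16}\leq\frac{t}{4(\lambda-1)}$. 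Assembling the four estimates gives $\mu(\tau_k-\tau_n,t)>1-\varepsilon$, and the $\nu$-part is word-for-word the same with $\max$, $\geq$ and $<\varepsilon$ in place of $\min$, $\leq$ and $>1-\varepsilon$, so $(\tau_k)$ is slowly oscillating with respect to logarithmic summability with the same $\lambda$ and a threshold $\max\{m_0,m_1\}$.

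I expect the main obstacle to be exactly the estimate $\frac{\ell_k-\ell_n}{\ell_k}<\lambda-1$, together with the companion bounds on the two boundary coefficient sums. Unlike the Ces\`{a}ro case, where the analogous ratio satisfies $\frac{k-n}{k}<\lambda-1$ identically on $n<k\leq\floor{\lambda n}$, here the ratio of harmonic sums is only asymptotically $\frac{\lambda-1}{\lambda}$: from $\ln(n+1)\leq\ell_n$ and $\ell_k\leq1+\ln k\leq1+\lambda\ln n$ one finds $\frac{\ell_k-\ell_n}{\ell_k}\to\frac{\lambda-1}{\lambda}<\lambda-1$, so the desired inequality holds only once $n$ passes a threshold. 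Enlarging $m_0$ to absorb that threshold (and similarly for the boundary coefficient bounds) is what makes the logarithmic argument a little more delicate than its Ces\`{a}ro counterpart, but it poses no real difficulty since slow oscillation is required only for large indices.
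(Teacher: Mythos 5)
Your proposal is correct and follows essentially the same route as the paper's proof: the same decomposition of $\tau_k-\tau_n$, the same four-block splitting at $m_0-1$ and $\floor{n^{1/\lambda}}$, the same coefficient estimate $\frac{\ell_k-\ell_n}{\ell_k}<\lambda-1$, and Lemma~\ref{loglemma} (inequality \eqref{loglemma2}) for the central block, with only the constants differing (you use $t/16$ and secure $\frac{\lambda-1}{\ln\lambda}(\ln\lambda+\ln2+2)<4$ by shrinking $\lambda$ toward $1$, whereas the paper uses $t/20$ so that the corresponding bound $<5$ holds for every $\lambda\in(1,2)$). Your closing concern is unnecessary, though harmless: since $\ell_k-\ell_n\leq\ln k-\ln n$ and $\ell_k\geq\ln k$, one has $\frac{\ell_k-\ell_n}{\ell_k}\leq\frac{\ln k-\ln n}{\ln k}<\lambda-1$ for all $n\geq2$ and $n<k\leq\floor{n^{\lambda}}$, not merely beyond a large threshold.
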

\begin{proof}
Let sequence $(u_k)$ be slowly oscillating with respect to logarithmic summability and $\tau_k=\frac{1}{\ell_k}\sum_{j=1}^k\frac{u_j}{j}$. Fix $t>0$. For given $\varepsilon>0$ there exists $1<m_0,m_1\in\mathbb{N}$ and $1<\lambda<2$ such that
\begin{itemize}
\item  $\mu(u_k-u_n,t/20)>1-\varepsilon$ and $\nu(u_k-u_n,t/20)<\varepsilon$ whenever $m_0\leq n< k\leq\floor{n^{\lambda}}$.
\item
\begin{equation*}
\mu\left(\left(\dfrac{1}{\ell_n}-\dfrac{1}{\ell_k}\right)\sum\limits_{j=1}^{m_0-1}\!\!\left(\frac{u_n-u_j}{j}\right),\dfrac{t}{4}\right)>1-\varepsilon \quad \textrm{and} \quad \nu\left(\left(\dfrac{1}{\ell_n}-\dfrac{1}{\ell_k}\right)\sum\limits_{j=1}^{m_0-1}\!\!\left(\frac{u_n-u_j}{j}\right),\dfrac{t}{4}\right)<\varepsilon
\end{equation*}%
whenever $m_1\leq n< k\leq\floor{n^{\lambda}}$, by virtue of inequalities in \eqref{loglemma1}.
\end{itemize}
Then, for $\max\{m_0,m_1\}\leq n< k\leq\floor{n^{\lambda}}$, we get
\begin{eqnarray*}
\mu(\tau_k-\tau_n,t)&=&\mu\left(\left(\frac{1}{\ell_n}-\dfrac{1}{\ell_k}\right)\sum_{j=1}^{n}\!\!\left(\frac{u_n-u_j}{j}\right)+\frac{1}{\ell_k}\sum_{j=n+1}^{k}\!\!\left(\frac{u_j-u_n}{j}\right),t\right)
\\&\geq&
\min\left\{\mu\left(\left(\frac{1}{\ell_n}-\dfrac{1}{\ell_k}\right)\sum_{j=1}^{m_0-1}\!\!\left(\frac{u_n-u_j}{j}\right),\frac{t}{4}\right),\mu\left(\left(\frac{1}{\ell_n}-\dfrac{1}{\ell_k}\right)\sum_{j=m_0}^{\floor{n^{1/\lambda}}}\!\!\left(\frac{u_n-u_j}{j}\right),\frac{t}{4}\right),\right.
\\&&\hspace{2cm}
\left.\mu\left(\left(\frac{1}{\ell_n}-\dfrac{1}{\ell_k}\right)\sum_{j=\floor{n^{1/\lambda}}+1}^{n}\!\!\left(\frac{u_n-u_j}{j}\right),\frac{t}{4}\right),\mu\left(\dfrac{1}{\ell_k}\sum_{j=n+1}^{k}\!\!\left(\frac{u_n-u_j}{j}\right),\frac{t}{4}\right)\right\}
\\&\geq&
\min\left\{\mu\left(\left(\dfrac{1}{\ell_n}-\dfrac{1}{\ell_k}\right)\sum_{j=1}^{m_0-1}\!\!\left(\frac{u_n-u_j}{j}\right),\frac{t}{4}\right),\mu\left(\left(\dfrac{1}{\ell_n}-\dfrac{1}{\ell_k}\right)\sum_{j=m_0}^{\floor{n^{1/\lambda}}}\!\!\left(\frac{u_n-u_j}{j}\right),\frac{t}{4}\right),\right.
\\&&\hspace{2cm}
\left.\min_{\floor{n^{1/\lambda}}\leq j\leq n}\mu\left(u_n-u_j,\frac{t}{4}\right),\min_{n+1\leq j\leq k}\mu\left(u_j-u_n,\frac{t}{4}\right)\right\}
\\&>&
\min\left\{\mu\left(\left(\frac{1}{\ell_n}-\dfrac{1}{\ell_k}\right)\sum_{j=m_0}^{\floor{n^{1/\lambda}}}\!\!\left(\frac{u_n-u_j}{j}\right),\frac{t}{4}\right), 1-\varepsilon\right\}
\\&\geq&
\min\left\{\mu\left(\frac{1}{\ell_n}\sum_{j=m_0}^{\floor{n^{1/\lambda}}}\!\!\left(\frac{u_n-u_j}{j}\right),\frac{t}{4(\lambda-1)}\right), 1-\varepsilon\right\}
\\&\geq&
\min\left\{\mu\left(\frac{1}{\ell_n}\sum_{j=m_0}^{\floor{n^{1/\lambda}}}\!\!\left(\frac{u_n-u_j}{j}\right),\left\{1+\frac{\ln2+2}{\ln\lambda}\right\}\frac{t}{20}\right), 1-\varepsilon\right\}
\\&\geq&
1-\varepsilon
\end{eqnarray*}
by virtue of the facts that $\frac{\ell_k-\ell_n}{\ell_k}\leq\frac{\ln k-\ln n}{\ln k}<\lambda-1$ and $\frac{\lambda-1}{\ln\lambda}(\ln\lambda+\ln2+2)<5$, and of Lemma \ref{loglemma}. On the other hand, $\nu(\tau_k-\tau_n,t)<\varepsilon$ can be shown similarly. Hence, the proof is completed.
\end{proof}
Now we give the main theorem of this section.
\begin{theorem}\label{logmain}
Let sequence $(u_k)$ be in $(N,\mu,\nu)$. If $(u_k)$ is statistically logarithmic summable to some $a\in N$ and slowly oscillating with respect to logarithmic summability, then $(u_k)$ is convergent to $a$.
\end{theorem}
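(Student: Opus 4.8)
The plan is to reproduce the structure of the Ces\`{a}ro analogue (Theorem \ref{cesaromain}) in the logarithmic setting, chaining together the three ingredients already assembled in this section. The hypothesis that $(u_k)$ is statistically logarithmic summable to $a$ means exactly that $st_{\mu,\nu}-\lim \tau = a$, i.e. the sequence $(\tau_k)$ of logarithmic means is statistically convergent to $a$; this is the object on which the chain of theorems will act.

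First I would use Theorem \ref{loscillatingtooscillating} to transfer slow oscillation with respect to logarithmic summability from $(u_k)$ to its logarithmic means: since $(u_k)$ is slowly oscillating with respect to logarithmic summability, so is $(\tau_k)$. At this point $(\tau_k)$ is simultaneously statistically convergent to $a$ and slowly oscillating with respect to logarithmic summability, so Theorem \ref{logsttoordinary} upgrades its statistical convergence to ordinary convergence, giving $\tau_k \to a$. But $\tau_k \to a$ is precisely the assertion that $(u_k)$ is (ordinarily) logarithmic summable to $a$. Finally, invoking Theorem \ref{logtoordinary} on the now ordinarily logarithmic summable and slowly oscillating sequence $(u_k)$ yields $u_k \to a$, which is the desired conclusion.

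Thus the theorem itself is a short, purely formal three-step deduction, and I do not expect any genuine obstacle at this level. The real difficulty has already been absorbed into the preceding results: the delicate point is Theorem \ref{loscillatingtooscillating}, the stability of slow oscillation under the logarithmic averaging $\tau_k=\frac{1}{\ell_k}\sum_{j=1}^k \frac{u_j}{j}$, which in turn relies on the quantitative estimates of Lemma \ref{loglemma} (in particular the bound $q\leq\frac{1}{\ln\lambda}\ln(2\ln n/\ln k)$ and the inequality $\frac{\lambda-1}{\ln\lambda}(\ln\lambda+\ln2+2)<5$). Granting those, the present theorem follows by simply composing Theorems \ref{loscillatingtooscillating}, \ref{logsttoordinary} and \ref{logtoordinary} in that order.
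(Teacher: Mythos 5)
Your proposal is correct and follows exactly the same three-step chain as the paper's own proof: Theorem \ref{loscillatingtooscillating} to transfer slow oscillation to $(\tau_k)$, Theorem \ref{logsttoordinary} to upgrade statistical convergence of $(\tau_k)$ to ordinary convergence, and Theorem \ref{logtoordinary} to conclude $u_k\to a$. No differences worth noting.
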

\begin{proof}
Let $(u_k)$ be statistically logarithmic summable to some $a\in N$ and slowly oscillating with respect to logarithmic summability. Then, by Theorem \ref{loscillatingtooscillating} sequence $(\tau_k)$ of logarithmic means is also slowly oscillating with respect to logarithmic summability. From Theorem \ref{logsttoordinary}, $(\tau_k)\to a$. This means that $(u_k)$ is logarithmic summable to $a$. Since $(u_k)$ is slowly oscillating with respect to logarithmic summability, from Theorem \ref{logtoordinary} $(u_k)$ is convergent to $a$.
\end{proof}
In view of Theorem \ref{lboundedtoslowly} and Theorem \ref{logmain} we get following theorem.
\begin{theorem}
Let sequence $(u_k)$ be in $(N,\mu,\nu)$. If $(u_k)$ is logarithmic summable to $a\in N$ and  $\{k\ln k(u_{k}-u_{k-1})\}$ is q-bounded, then  $(u_k)$ converges to $a$.
\end{theorem}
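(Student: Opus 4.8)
The plan is to derive the statement as an immediate corollary of two results already established in this section, exactly as the sentence preceding it advertises. The two hypotheses split into a summability assumption and a Hardy-type condition on the weighted differences $\{k\ln k(u_k-u_{k-1})\}$; the strategy is to upgrade the latter into the slow-oscillation condition that the main theorem consumes, and then to quote that theorem.

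First I would apply Theorem \ref{lboundedtoslowly}. Since $\{k\ln k(u_k-u_{k-1})\}$ is assumed q-bounded, that theorem guarantees at once that $(u_k)$ is slowly oscillating with respect to logarithmic summability. This is the only substantive step, and it is not really an obstacle because the work has already been carried out in Theorem \ref{lboundedtoslowly}; here it serves merely to translate the Hardy-type control of the increments into the oscillation hypothesis needed below.

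Then I would invoke Theorem \ref{logmain}. Reading the summability hypothesis as statistical logarithmic summability to $a$ (which is what matches Theorem \ref{logmain}, and is the natural analogue of the Ces\`{a}ro statement in Theorem \ref{cesaromainbnd}), we now have both ingredients of Theorem \ref{logmain}: statistical logarithmic summability to $a$ together with slow oscillation with respect to logarithmic summability. Theorem \ref{logmain} then yields $u_k\to a$, completing the proof. I expect no genuine difficulty; the chaining is routine and the only point requiring care is that the slow-oscillation notion produced by Theorem \ref{lboundedtoslowly} is precisely the \emph{logarithmic} one demanded by Theorem \ref{logmain}. Should ordinary logarithmic summability be intended instead, the identical argument closes via Theorem \ref{logtoordinary} in place of Theorem \ref{logmain}, with the slow-oscillation step unchanged.
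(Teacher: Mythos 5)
Your proposal is correct and follows exactly the paper's route: Theorem \ref{lboundedtoslowly} converts the q-boundedness of $\{k\ln k(u_k-u_{k-1})\}$ into slow oscillation with respect to logarithmic summability, and the conclusion then follows from Theorem \ref{logmain} (the paper's stated derivation), with your remark about Theorem \ref{logtoordinary} correctly covering the literal reading of ``logarithmic summable'' in the statement.
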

For the case of higher order logarithmic summability methods we can give following Tauberian theorems as corollaries.
\begin{definition}
Let $(u_k)$ be in $(N,\mu,\nu)$. $m$-th order logarithmic means $L^m_k$ of $(u_k)$ is defined by
\begin{eqnarray*}
L^m_k=\frac{1}{\ell_k}\sum_{j=1}^{k}\frac{L^{m-1}_j}{j},
\end{eqnarray*}
where $L_k^0=u_k$. We say that sequence $(u_k)$ is statistically $(L,m)$ summable to $a\in N$ if sequence $(L^{m}_k)$ is statistically convergent to $a$.
\end{definition}
\begin{example}\label{ex4}
Consider the vector space $C[0,1]$ equipped with the norm $\Vert f\Vert=\max_{x\in[0,1]}|f(x)|$. Let
\begin{eqnarray*}
f_k(x)=
\begin{cases}
(-x)^kk^2+k^2\left(\ell_k\right)^3, \quad &k=n^2\\
(-x)^kk^2-k\left(\ell_{k-1}\right)^2\left((k-1)\ell_{k-1}+k\ell_k\right),\quad &k=n^2+1\\
(-x)^kk^2+k(k-1)\ell_{k-1}\left(\ell_{k-2}\right)^2,\quad &k=n^2+2\\
(-x)^kk^2,\quad& otherwise
\end{cases}
\end{eqnarray*}
be in $IF-$normed space $(C[0,1],\mu_0,\nu_0)$ where $\ell_k=\sum_{j=1}^k 1/j$. Sequence $(f_k)$ is neither convergent nor statistically convergent in $(C[0,1],\mu_0,\nu_0)$. Besides, $(f_k)$ is neither statistical Ces\`{a}ro summable nor statistical logarithmic summable.

Let us apply statistical $(L,2)$ summability to achieve a limit. Logarithmic means $(L^1_k)$  and $(L^2_k)$ of sequence $(f_k)$ are
\begin{eqnarray*}
L^1_k(x)=
\begin{cases}
\tau^{\mathsmaller{(1)}}_k(x)+k(\ell_k)^2, \quad &k=n^2\\
\tau^{\mathsmaller{(1)}}_k(x)-k(\ell_{k-1})^2, \quad &k=n^2+1\\
\tau^{\mathsmaller{(1)}}_k(x),\quad& otherwise,
\end{cases}
\end{eqnarray*}
\begin{eqnarray*}
L^2_k(x)=
\begin{cases}
\tau^{\mathsmaller{(2)}}_k(x)+\ell_k, \quad &k=n^2\\
\tau^{\mathsmaller{(2)}}_k(x),\quad& otherwise
\end{cases}
\end{eqnarray*}
where sequence $\left\{\tau^{\mathsmaller{(m)}}_k\right\}$ denotes $m-$fold logarithmic means of sequence $\left\{(-x)^kk^2\right\}$. Hence,  $(f_k)$ is statistically $(L,2)$ summable to 0 since for each $t>0$ we have $st-\lim\mu_0\left(L^2_k, t\right)=1$ and $st-\lim\nu_0\left(L^2_k, t\right)=0$ where
\begin{eqnarray*}
\mu_0\left(L^2_k, t\right)=
\begin{cases}
\frac{t}{t+\left\Vert \tau^{\mathsmaller{(2)}}_k(x)+\ell_k\right\Vert}\ccomma \quad &k=n^2\\[3mm]
\frac{t}{t+\left\Vert \tau^{\mathsmaller{(2)}}_k(x)\right\Vert}\ccomma\quad& otherwise
\end{cases}
\quad and \quad
\nu_0\left(L^2_k, t\right)=
\begin{cases}
\frac{\left\Vert\tau^{\mathsmaller{(2)}}_k(x)\right\Vert}{t+\left\Vert\tau^{\mathsmaller{(2)}}_k(x)+\ell_k\right\Vert}\ccomma \quad &k=n^2\\[3mm]
\frac{\left\Vert\tau^{\mathsmaller{(2)}}_k(x)\right\Vert}{t+\left\Vert\tau^{\mathsmaller{(2)}}_k(x)\right\Vert}\ccomma\quad& otherwise.
\end{cases}
\end{eqnarray*}
\end{example}
In view of Theorem \ref{logtoordinary}, Theorem \ref{loscillatingtooscillating} and Theorem \ref{logmain}  we get following Tauberian theorem.
\begin{theorem}
Let sequence $(u_k)$ be in $(N,\mu,\nu)$. If $(u_k)$ is statistically $(L,m)$ summable to some $a\in N$ and slowly oscillating with respect to logarithmic summability, then $(u_k)$ is convergent to $a$.
\end{theorem}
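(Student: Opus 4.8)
The plan is to peel off the logarithmic means one layer at a time, descending from the $m$-th mean down to the original sequence, using exactly the three ingredients flagged before the statement. The key structural observation is that the recursion $L^m_k=\frac{1}{\ell_k}\sum_{j=1}^{k}\frac{L^{m-1}_j}{j}$ says precisely that $(L^m_k)$ is the first-order logarithmic mean of $(L^{m-1}_k)$. Consequently, statistical $(L,m)$ summability of $(u_k)$ --- which by definition means $st_{\mu,\nu}-\lim L^m=a$ --- is exactly statistical logarithmic summability of the sequence $(L^{m-1}_k)$ to $a$.

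First I would establish that every intermediate mean is slowly oscillating with respect to logarithmic summability. Since $(L^0_k)=(u_k)$ is slowly oscillating with respect to logarithmic summability by hypothesis, and each $(L^{i}_k)$ is the logarithmic mean of $(L^{i-1}_k)$, applying Theorem \ref{loscillatingtooscillating} successively for $i=1,2,\ldots,m-1$ yields that $(L^1_k),(L^2_k),\ldots,(L^{m-1}_k)$ are all slowly oscillating with respect to logarithmic summability.

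Next I would run the descent. By the observation above, $(L^{m-1}_k)$ is statistically logarithmic summable to $a$ and, by the previous step, slowly oscillating with respect to logarithmic summability; hence Theorem \ref{logmain} gives $L^{m-1}_k\to a$, which is precisely ordinary logarithmic summability of $(L^{m-2}_k)$ to $a$. Now I would iterate Theorem \ref{logtoordinary}: whenever $(L^{i}_k)$ is logarithmic summable to $a$ and slowly oscillating with respect to logarithmic summability, Theorem \ref{logtoordinary} upgrades this to $L^{i}_k\to a$, i.e. to logarithmic summability of $(L^{i-1}_k)$ to $a$. Running this for $i=m-2,m-3,\ldots,0$ brings us down to $(L^0_k)=(u_k)$ and yields $u_k\to a$. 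Equivalently, the whole argument may be phrased as an induction on $m$, the case $m=1$ being Theorem \ref{logmain} and the inductive step reducing statistical $(L,m)$ summability to statistical $(L,m-1)$ summability via Theorem \ref{loscillatingtooscillating} and Theorem \ref{logmain} together with Theorem \ref{convergencetost}.

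I expect no genuine analytic obstacle, since all the substantive work is already packaged in the quoted theorems; the only point requiring care is the bookkeeping in the first two paragraphs --- correctly identifying $(L^m_k)$ as the logarithmic mean of $(L^{m-1}_k)$, and verifying that slow oscillation with respect to logarithmic summability propagates through all $m-1$ layers of means by repeated use of Theorem \ref{loscillatingtooscillating}. Once that is in place the descent is purely mechanical.
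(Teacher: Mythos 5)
Your argument is correct and is precisely the route the paper intends: the paper states this result as a corollary of Theorem \ref{logtoordinary}, Theorem \ref{loscillatingtooscillating} and Theorem \ref{logmain}, and your layer-by-layer descent (propagating slow oscillation through the means via Theorem \ref{loscillatingtooscillating}, applying Theorem \ref{logmain} at the top level, then iterating Theorem \ref{logtoordinary} down to $(u_k)$) is exactly the spelled-out version of that. No gaps.
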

Also, in view of theorem above and Theorem \ref{lboundedtoslowly} we get following theorem.
\begin{theorem}
Let sequence $(u_k)$ be in $(N,\mu,\nu)$. If $(u_k)$ is statistically $(L,m)$ summable to some $a\in N$ and $\{k\ln k(u_{k}-u_{k-1})\}$ is q-bounded, then $(u_k)$ is convergent to $a$.
\end{theorem}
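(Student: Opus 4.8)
The plan is to obtain this statement as a direct corollary of two results already in hand, exactly mirroring the way Theorem~\ref{cesaromainbnd} was deduced from Theorem~\ref{cboundedtoslowly} and Theorem~\ref{cesaromain}. The two hypotheses are statistical $(L,m)$ summability to $a$ and the q-boundedness of $\{k\ln k(u_k-u_{k-1})\}$, and since all the analytic work has been carried out in the preceding theorems, no new estimates will be needed.

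First I would invoke Theorem~\ref{lboundedtoslowly}: because $\{k\ln k(u_k-u_{k-1})\}$ is q-bounded, the sequence $(u_k)$ is slowly oscillating with respect to logarithmic summability. This step converts the Hardy-type growth-rate Tauberian hypothesis into the slow-oscillation condition demanded by the higher-order Tauberian theorem. It is the only place where the specific form of the assumption $\{k\ln k(u_k-u_{k-1})\}$ enters, the weight $k\ln k$ being precisely what compensates the logarithmic scaling.

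Next, having at my disposal both that $(u_k)$ is statistically $(L,m)$ summable to $a$ and that $(u_k)$ is slowly oscillating with respect to logarithmic summability, I would apply the immediately preceding theorem, namely the $(L,m)$ Tauberian theorem for statistically $(L,m)$ summable slowly oscillating sequences, to conclude that $(u_k)$ converges to $a$. Composing these two implications finishes the argument.

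There is essentially no obstacle in the present statement itself: the entire difficulty resides upstream, in Theorem~\ref{lboundedtoslowly}, which ties the weighted-difference boundedness to slow oscillation, and in the chain Theorem~\ref{loscillatingtooscillating}--Theorem~\ref{logmain} together with its $(L,m)$ generalization, which transfers statistical logarithmic summability back to ordinary convergence. If I had to flag a subtlety, it would only be to confirm that the slow-oscillation notion furnished by Theorem~\ref{lboundedtoslowly} is the very same one (slow oscillation with respect to logarithmic summability) required as input by the $(L,m)$ theorem, which it is by construction; with that matching verified, the corollary follows at once.
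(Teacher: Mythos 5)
Your proposal is correct and follows exactly the paper's own route: the paper also deduces this statement immediately by combining Theorem~\ref{lboundedtoslowly} (q-boundedness of $\{k\ln k(u_k-u_{k-1})\}$ implies slow oscillation with respect to logarithmic summability) with the preceding statistical $(L,m)$ Tauberian theorem. No gaps to flag.
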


\end{document}